\title{Vector and Matrix Optimal Mass Transport: \\Theory, Algorithm, and Applications} 
\author{Ernest K. Ryu, Yongxin Chen, Wuchen Li, and Stanley Osher}
\date{December 29, 2017}
\DeclareMathOperator*{\esssup}{ess\,sup}
\newtheorem{theorem}{Theorem}
\newtheorem{lemma}{Lemma}
\newcommand{\divg}{\mathrm{div}}
\newcommand{\vx}{\mathbf{x}}
\newcommand{\vy}{{\mathbf y}}
\newcommand{\vu}{{\mathbf u}}
\newcommand{\vU}{{\mathbf U}}
\newcommand{\vW}{{\mathbf W}}
\newcommand{\vL}{{\mathbf L}}
\DeclareRobustCommand{\vect}[1]{\accentset{\rightharpoonup}{#1}}
\newcommand{\1}{\>}
\newcommand{\2}{\>\>}
\newcommand{\3}{\>\>\>}
\newcommand{\4}{\>\>\>\>}
\newcommand{\For}{\textbf{for }}
\newcommand{\End}{\textbf{end}}
\DeclareMathOperator*{\argmin}{argmin}
\DeclareMathOperator*{\argmax}{argmax}
\newcommand{\rd}{\mathbb{R}^d}
\newcommand{\rk}{\mathbb{R}^k}
\newcommand{\rkd}{\mathbb{R}^{k\times d}}
\newcommand{\srk}{\mathrm{shrink}}
\newcommand{\mR}{{\mathbb R}}
\newcommand{\cC}{{\mathcal C}}
\newcommand{\cG}{{\mathcal G}}
\newcommand{\cH}{{\mathcal H}}
\newcommand{\cS}{{\mathcal S}}
\newcommand{\diag}{\operatorname{diag}}
\newcommand{\trace}{\operatorname{tr}}
\definecolor{grey}{rgb}{0.6,0.6,0.6}
\definecolor{lightgray}{rgb}{0.97,.99,0.99}
\begin{document}

\maketitle

\begin{abstract}
In many applications such as color image processing, data has more than one piece of information associated with each spatial coordinate, and in such cases the classical optimal mass transport (OMT) must be generalized to handle vector-valued or matrix-valued densities. In this paper, we discuss the vector and matrix optimal mass transport and present three contributions.
We first present a rigorous mathematical formulation for these setups and provide analytical results including existence of solutions and strong duality.
Next, we present a simple, scalable, and parallelizable methods to solve the vector and matrix-OMT problems. Finally, we implement the proposed methods on a CUDA GPU and present experiments and applications.
\end{abstract}

\maketitle

\section{Introduction}
Optimal mass transport (OMT) is a subject with a long history.
Started by Monge \cite{Mon81} and developed by many great mathematicians \cite{Kan42,Bre91,GanMcc96,Mcc97,JorKinOtt98,BenBre00,OttVil00}, the subject now has 
incredibly rich theory and applications. It has found numerous applications in different areas such as partial differential equations, probability theory, physics, economics, image processing, and control \cite{Eva99,HakTanAng04,TanGeoTan10,MueKarKolTan13,CheGeoPav14e,CheGeoPav15b,Che16}. See also \cite{Rac98,Vil03,AmbGigSav06} and references therein.

However, in many applications such as color image processing, there is more than one piece of information associated with each spatial coordinate,
and such data can be interpreted as vector-valued or matrix-valued densities.
As the classical optimal mass transport works with scalar probability densities,
such applications require a new notion of mass transport.
For this purpose, Chen et al.\ \cite{CheGeoTan16,CheGeoTan17,CheGeoNinTan17, chen2017}
recently developed a framework for vector-valued and matrix-valued optimal mass transport. See also \cite{NinGeoTan13,NinGeo14,NinGeoTan15,FitLauSte16,QPT,VogLel17} for other different frameworks. For vector-valued OMT,
potential applications include color image processing, multi-modality medical imaging, and image processing involving textures. For matrix-valued OMT, we have diffusion tensor imaging,  multivariate spectral analysis, and stress tensor analysis.

Several mathematical aspects of vector and matrix-valued OMT were not addressed in the previous work \cite{CheGeoTan16,CheGeoTan17,CheGeoNinTan17}.
As the first contribution of this paper, 
we present duality and existence results
of the continuous vector and matrix-valued OMT problems
along with rigorous problem formulations.

Although the classical theory of OMT is very rich, only recently has there been much attention to numerical methods to
compute the OMT.
Several recent work proposed algorithms 
to solve the $L^2$ OMT
\cite{AngHakTan03,Cut13,BenFroObe14,HabHor15,BenCarCut15,CheGeoPav15a,yongxin,GenCutPeyBac16}
and the $L^1$ OMT \cite{LiRyuOsh17,L1partial}.
As the second contribution of this paper, 
we present first-order primal-dual methods to solve the vector and matrix-valued OMT problems. The methods simultaneously solve for both the primal and dual solutions (hence a primal-dual method) and are scalable
as they are first-order methods. We also discuss the convergence of the methods.


As the third contribution of this paper, 
we implement the proposed method on a CUDA GPU and present several applications. The proposed algorithms' simple structure
allows us to
 effectively utilize the computing capability of the CUDA architecture, and we demonstrate this through our experiments. We furthermore release the code for scientific reproducibility.

The rest of the paper is structured as follows. In Section \ref{sec:OMT} we give a quick review of the classic OMT theory,
which allows us
 to present the
later sections in an analogous manner and thereby outline the similarities and differences.
In Section \ref{sec:vectorOMT} and Section \ref{sec:matrixOMT}
we present the vector and matrix-valued OMT problems and state a few theoretical results.
In Section \ref{s-duality-proof}, we present and prove the analytical results.
In Section \ref{s:alg-prelim}, we discuss preliminaries we need for 
Section \ref{sec:algorithm}, where we present the algorithm. 
In Section \ref{sec:examples}, we present the experiments and applications.

 \section{Optimal mass transport} \label{sec:OMT}
Let $\Omega\subset \mathbb{R}^d$ be a closed, convex, compact domain.
Let $\lambda^0$ and $\lambda^1$ be nonnegative densities 
supported on $\Omega$ with unit mass, i.e., 
$\int_{\Omega}\lambda^0(\vx)\;d\vx=\int_{\Omega}\lambda^1(\vx)\;d\vx=1$.
Let $\|\cdot\|$ denote any norm on $\mathbb{R}^d$.

In 1781, Monge posed the optimal mass transport (OMT) problem,
which solves
\begin{equation}\label{map}
\begin{split}
\underset{T}{\text{minimize}}\quad \int_{\Omega}\|\vx-T(\vx)\| \lambda^0(\vx)\;d\vx.
\end{split}
\end{equation}
The optimization variable $T:\Omega\rightarrow\Omega$
is smooth, one-to-one, and transfers $\lambda^0(\vx)$ to $\lambda^1(\vx)$.
The optimization problem \eqref{map} is 
nonlinear and nonconvex.
In 1940, Kantorovich
relaxed \eqref{map} into a linear (convex) optimization problem:
\begin{equation}\label{Monge}
S(\lambda^0,\lambda^1)=
\left(
\begin{array}{ll}
\underset{\pi}{\text{minimize}}&
\int_{\Omega\times \Omega}\|\vx-\vy\| \pi(\vx,\vy)\;d\vx d\vy\\
\mbox{subject to} &
\pi(\vx,\vy)\ge 0\\
&\int_{\Omega}\pi(\vx,\vy)\;d\vy=\lambda^0(\vx)\\
&\int_{\Omega}\pi(\vx,\vy)\;d\vx=\lambda^1(\vy).
\end{array}
\right)
\end{equation}
The optimization variable $\pi$ is a joint nonnegative measure on $\Omega\times \Omega$
having  $\lambda^0(\vx)$ and $\lambda^1(\vy)$ as marginals.
To clarify, $S(\lambda^0,\lambda^1)$ denotes the optimal value of \eqref{Monge}.

\subsection{Scalar optimal mass transport}
The theory of optimal transport \cite{GanMcc96,Vil03,Vil08} remarkably points out that \eqref{Monge} is equivalent to the following flux minimization problem:
\begin{equation}\label{Kan1}
S(\lambda^0,\lambda^1)=
\left(
\begin{array}{ll}
\underset{\vu}{\text{minimize}}&
\int_{\Omega}\|\vu(\vx)\| \;d\vx\\
\mbox{subject to} &
\divg_\vx (\vu)(\vx)=\lambda^0(\vx)-\lambda^1(\vx)\\
&\vu(\vx)^T \mathbf{n}(\vx)=0,\,\,
\mbox{for all }\begin{cases}
\vx\in \partial \Omega,\\
\text{$ \mathbf{n}(\vx)$ normal to $\partial\Omega$},
\end{cases}
\end{array}
\right)
\end{equation}
where $\vu=(u_1,\dots,u_d): \Omega\rightarrow \mathbb{R}^d$ is the optimization variable
and $\divg_\vx$ denote the (spatial) divergence operator. Although \eqref{Kan1} and \eqref{Monge} are mathematically equivalent,
\eqref{Kan1} is much more computationally effective as its optimization variable
$\vu$ is much smaller when discretized.

It is worth mentioning that OMT in formulation \eqref{Kan1} is very close to the problems in compressed sensing. Its objective function is homogeneous degree one and the constraint is linear. It can be observed that for characterizing the OMT,
divergence operator in \eqref{Kan1} play the key roles. Later on, we extend the definition of $L_1$ OMT problem by extending these differential operators into a general meaning.

The optimization problem \eqref{Kan1} has the following dual problem:
\begin{equation}
S(\lambda^0,\lambda^1)=
\left(
\begin{array}{ll}
\underset{\phi}{\text{maximize}}& \int_{\Omega}
 \phi(\vx)(\lambda^1(\vx)-\lambda^0(\vx))\;d\vx\\
\mbox{subject to} &\|\nabla_\vx \phi(\vx)\|_{*} \le 1
\quad\text{for all } \vx\in \Omega,
\end{array}
\right)
\label{somt-dual}
\end{equation}
where the optimization and $\phi:\Omega \rightarrow \mathbb{R}$ is a function.
We write $\|\cdot\|_*$ for the dual norm of $\|\cdot\|$.

It is well-known that,
strong duality holds between
\eqref{Kan1} and \eqref{somt-dual}
in the sense that the minimized
and maximized objective values are equal
\cite{Vil03}.
Therefore, 
we take either \eqref{Kan1} or \eqref{somt-dual} as the definition of $S$.


Rigorous definitions
of the optimization problems
\eqref{Kan1} or \eqref{somt-dual}
are somewhat technical.
We skip this discussion,
as scalar optimal mass transport is standard.
In Section~\ref{s-duality-proof},
we rigorously discuss 
the vector-OMT problems,
so any rigorous discussion of the scalar-OMT problems can be inferred
as a special case.


\subsection{Theoretical properties}
\label{ss:somt-theory}

The algorithm we present in Section~\ref{sec:algorithm} is a primal-dual algorithm and, as such, finds solutions to both
Problems~\eqref{Kan1} and \eqref{somt-dual}.
This is well-defined as 
both the primal and dual problems have
solutions
\cite{Vil03}.



Write $\mathbb{R}_+$ for the set of nonnegative real numbers.
Write $\mathcal{P}(\Omega,\mathbb{R})$ for the space of nonnegative densities supported on $\Omega$ with unit mass.
We can use $S(\lambda^0,\lambda^1)$ 
as a distance measure between $\lambda^0,\lambda^1\in \mathcal{P}(\Omega,\mathbb{R})$.
The value $S:\mathcal{P}(\Omega,\mathbb{R})\times \mathcal{P}(\Omega,\mathbb{R})\rightarrow \mathbb{R}_+$
defines a metric on $\mathcal{P}(\Omega,\mathbb{R})$
\cite{Vil03}.


%
%

\section{Vector optimal mass transport}\label{sec:vectorOMT}
Next we discuss the vector-valued optimal transport, proposed recently in \cite{CheGeoTan17}.
The basic idea is to combine 
scalar optimal mass transport with network flow problems \cite{AhuMagOrl93}.

Color image processing is the simplest application to think of for this setup.
On an RGB color image, there are 3 values associated with
each pixel.
To generalize scalar-OMT to this setup, the cost for changing colors
is defined with a graph.
The edges represent the cost of changing one channel/color to another.

\subsection{Gradient and divergence on graphs}\label{sec:vectorgrad}
Consider a connected, positively weighted, undirected graph $\cG $ with $k$ nodes and $\ell$ edges. 
To define an incidence matrix for $\cG$, we say an edge $\{i,j\}$ points from $i$ to $j$,
i.e., $i\rightarrow j$, if $i<j$.
This choice is arbitrary and does not affect the final result.
With this edge orientation, the incidence matrix $D\in \mathbb{R}^{k\times \ell}$ is
\[
D_{ie}=\left\{ \begin{array}{ll}
                    +1 & \text{if edge $e=\{i,j\}$ for some node $j>i$}\\
                    -1 &  \text{if edge $e=\{j,i\}$ for some node $j<i$}\\
                    0 & \text{otherwise}.
                  \end{array}\right.
\]
For example, the incidence matrix of the graph of Figure~\ref{fig:graph-example} is
\[
D=
\begin{bmatrix}
1&1&0&1&0\\
-1&0&1&0&0\\
0&-1&-1&0&1\\
0&0&0&-1&-1
\end{bmatrix}.
\]
Write 
$\Delta_\cG =-D\diag \{1/c_1^2, \cdots, 1/c_\ell^2\} D^T$
for the (negative) graph Laplacian, where $1/c_1^2, \cdots, 1/c_\ell^2$ are the edge weights.
The edge weights are defined so that 
and $c_j$ represents the cost of traversing edge $j$ for $j=1,\dots,\ell$.

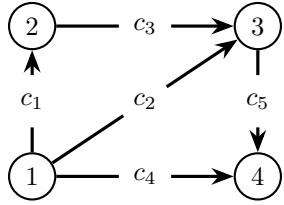
\begin{figure}
\begin{center}
\begin{tikzpicture}
\begin{scope}[every node/.style={circle,thick,draw}]
    \node (A) at (0,0) {1};
    \node (B) at (0,2) {2};
    \node (C) at (3,2) {3};
    \node (D) at (3,0) {4};
\end{scope}

\begin{scope}[>={Stealth[black]},
              every node/.style={fill=white,circle},
              every edge/.style={draw=black,very thick}]
    \path [->] (A) edge node {$c_1$} (B);
    \path [->] (A) edge node {$c_2$} (C);
    \path [->] (B) edge node {$c_3$} (C);
    \path [->] (A) edge node {$c_4$} (D);
    \path [->] (C) edge node {$c_5$} (D);
\end{scope}
\end{tikzpicture}
\end{center}
\caption{Example graph with $k=4$ nodes and $\ell=5$ edges.
To make $c_j$ the cost of traversing edge $j$,
the edge weight is defined to be $1/c_j^2$ for $j=1,\dots,\ell$.
}
\label{fig:graph-example}
\end{figure}

We define the gradient operator on $\cG$ as
$ \nabla_\cG x=  \diag \{1/c_1, \cdots,1/c_\ell\}D^T x$
and the divergence operator as
$\divg_\cG y = -D \diag \{1/c_1, \cdots,1/c_\ell\}y$.
So  the Laplacian can be rewritten as
$\Delta_\cG =\divg_\cG\nabla_\cG$.
Note that $\divg_\cG=-\nabla_\cG^*$, where $\nabla_\cG^*$ is the adjoint of $\nabla_\cG$.
This is in analogy with the usual spatial gradient and divergence operators
\cite{Maa11,ChoHuaLiZho12,ChoLiZho17,CheGeoTan17}.

The edge weights of a graph $\cG$ should be considered modeling parameters. 
In color image processing, for example, the edge weights of $\cG$ represent the cost of changing one color to another,
and they should be tuned to make results visually look best.

\subsection{Vector optimal mass transport}
We say $\vect{\lambda}:\Omega\rightarrow \mR^k_+$
is a nonnegative vector-valued density with unit mass if 
\[
\vect{\lambda}
(\vx)=
\begin{bmatrix}
\lambda_1(\vx)\\\vdots\\
\lambda_k(\vx)
\end{bmatrix},
\qquad
	\int_\Omega\sum_{i=1}^k \lambda_i(\vx)\;d\vx=1.
\]
Assume $\vect{\lambda}^0$ and $\vect{\lambda}^1$ are nonnegative vector-valued densities supported on $\Omega$ with unit mass.

We define the optimal mass transport between vector-valued densities as
    \begin{equation}\label{vomt}
V(\vect{\lambda}^0,\vect{\lambda}^1)=
\left(
\begin{array}{ll}
\underset{\vect{\vu},\vect{w}}{\text{minimize}}&
\int_{\Omega}\|\vect{\vu}(\vx)\|_u +\alpha\|\vect{w}(\vx)\|_w\;d\vx\\
\mbox{subject to} &
\divg_\vx (\vect{\vu})(\vx)
+
\divg_\mathcal{G}(\vect{w}(\vx))=\vect{\lambda}^0(\vx)-\vect{\lambda}^1(\vx)\\
& \vect{\vu} \text{ satisfies zero-flux b.c}.
\end{array}
\right)
\end{equation}
where 
 $\vect{\vu}:\Omega\rightarrow \mR^{k\times d}$
 and $\vect{w}:\Omega\rightarrow \mR^{\ell}$
 are the optimization variables, $\alpha>0$ is a parameter, and
 $\|\cdot\|_u$ is a norm on $\mathbb{R}^{k\times d}$ and $\|\cdot\|_w$ is a norm on $\mathbb{R}^{\ell}$.
 The parameter $\alpha$ represents the relative importance of the two flux terms $\vect{\vu}$ and $\vect{w}$.
 We write
 \[
\vect{\vu}=
		\begin{bmatrix}
		\vu_1^T
		\\ \vdots
		\\ \vu_k^T
		\end{bmatrix}
\qquad
\vect{w}=\begin{bmatrix}
w_1\\\vdots\\w_\ell\end{bmatrix}
\qquad
		\divg_\vx (\vect{\vu})=
		\begin{bmatrix}
		\divg_\vx (\vu_1)
		\\ \vdots
		\\ \divg_\vx(\vu_k)
		\end{bmatrix}.
 \]
 We call $\divg_\vx$ the 
  spatial divergence operator.
 The zero-flux boundary condition is 
 \[
 \vu_i(\vx)^T \mathbf{n}(\vx)=0,\,\,
\mbox{for all }\begin{cases}
\vx\in \partial \Omega,\\
\text{$ \mathbf{n}(\vx)$ normal to $\partial\Omega$}.
\end{cases}
\]
 for $i=1,\dots,k$.  
 Note that $\vect{w}$ has no boundary conditions.

The optimization problem \eqref{vomt} has the following dual problem:
\begin{equation}
V(\vect{\lambda}^0,\vect{\lambda}^1)=
\left(
\begin{array}{ll}
\underset{\vect{\phi}}{\text{maximize}}& \int_{\Omega}
\langle \vect{\phi}(\vx),\vect{\lambda}^1(\vx)
 		-\vect{\lambda}^0(\vx)\rangle \;d\vx\\
\mbox{subject to} &\|\nabla_\vx \vect{\phi}(\vx)\|_{u*} \le 1\\
&\|\nabla_\cG \vect{\phi}(\vx)\|_{w*} \le \alpha
\quad\text{for all } \vx\in \Omega,
\end{array}
\right)
\label{vomt-dual}
\end{equation}
where the optimization variable 
 $\vect{\phi}:\Omega \rightarrow \mathbb{R}^k$ is a function.
We write $\|\cdot\|_{u*}$ and $\|\cdot\|_{w*}$
for the dual norms of $\|\cdot\|_{u}$ and $\|\cdot\|_{w}$, respectively.
%

As stated in Theorem~\ref{thm:vomt-strong-duality},
strong duality holds between
\eqref{vomt} and \eqref{vomt-dual}
in the sense that the minimized
and maximized objective values are equal.
Therefore, 
we take either \eqref{vomt} or \eqref{vomt-dual} as the definition of $V$.
In Section~\ref{s-duality-proof},
we rigorously define the primal and dual problems and prove Theorem~\ref{thm:vomt-strong-duality}.

%



%
%
%
%
\subsection{Theoretical properties}
 The algorithm we present in Section~\ref{sec:algorithm} is a primal-dual algorithm and, as such, finds solutions to both
 Problems~\eqref{vomt} and \eqref{vomt-dual}.
 This is well-defined as both the primal
 and dual problems have solutions.
\begin{theorem} 
\label{thm:vomt-strong-duality}
The (infinite dimensional) primal
and dual optimization problems
\eqref{vomt} and \eqref{vomt-dual}
have solutions,
and their optimal values are the same,
i.e., strong duality holds.
\end{theorem}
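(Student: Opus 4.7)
The plan is to recast \eqref{vomt} in the Fenchel--Rockafellar template $\inf_{(\vect{\vu},\vect{w})} F(\vect{\vu},\vect{w}) + G(A(\vect{\vu},\vect{w}))$, where $A(\vect{\vu},\vect{w}) = \divg_\vx \vect{\vu} + \divg_\cG \vect{w}$, $G$ is the indicator of the singleton $\{\vect{\lambda}^0 - \vect{\lambda}^1\}$, and $F(\vect{\vu},\vect{w}) = \int_\Omega \|\vect{\vu}\|_u + \alpha\|\vect{w}\|_w\,d\vx$ is read as total variation with respect to the product norm on $\mR^{k\times d}\times \mR^\ell$. I would let $(\vect{\vu},\vect{w})$ range over vector-valued Radon measures, with the zero-flux boundary condition on $\vect{\vu}$ imposed distributionally, and let $\vect{\phi}$ range over $C(\Omega;\mR^k)$ whose distributional spatial and graph gradients lie in the appropriate $L^\infty$ dual-norm balls. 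Using $\divg_\cG = -\nabla_\cG^*$ and integration by parts (the boundary terms vanish), computing conjugates identifies $F^*\circ (-A^*)$ with the dual constraint set of \eqref{vomt-dual} and $G^*(\vect{\phi}) = \int \langle \vect{\phi},\vect{\lambda}^0-\vect{\lambda}^1\rangle\,d\vx$ with the (negated) dual objective, so once the hypotheses of Fenchel--Rockafellar are verified, strong duality between \eqref{vomt} and \eqref{vomt-dual} follows.

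Primal existence would be handled by the direct method of the calculus of variations. A minimizing sequence $(\vect{\vu}_n,\vect{w}_n)$ is bounded in total variation by coercivity of $F$, so Banach--Alaoglu produces a weak-$*$ subsequential limit $(\vect{\vu}^\star,\vect{w}^\star)$; the affine constraint is weak-$*$ closed since it may be tested against smooth functions, and $F$, as a supremum of linear functionals, is weak-$*$ lower semicontinuous. For dual existence, the constraints $\|\nabla_\vx \vect{\phi}\|_{u*}\le 1$ and $\|\nabla_\cG \vect{\phi}\|_{w*}\le \alpha$ make any feasible $\vect{\phi}$ Lipschitz in $\vx$ and uniformly bounded across graph edges; because $\vect{\lambda}^0$ and $\vect{\lambda}^1$ have the same total mass, the objective is unchanged when a constant vector is added to $\vect{\phi}$, so one normalizes a maximizing sequence to be uniformly bounded, extracts a uniform limit via Arzelà--Ascoli, and passes to the limit in both the continuous linear objective and the closed constraint set.

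Weak duality is immediate from the integration-by-parts identity above. The subtle step is the absence of a duality gap, and the main obstacle I anticipate is that $F$ is a $1$-homogeneous norm on a space of measures, hence nowhere continuous in the topology against which Fenchel--Rockafellar is usually applied, so the standard interior-point constraint qualification fails. I would circumvent this in either of two ways. First, by a regularization argument: replace $\vect{\lambda}^0,\vect{\lambda}^1$ with smooth mollifications $\vect{\lambda}^0_\varepsilon,\vect{\lambda}^1_\varepsilon$ so that a smooth, strictly feasible primal point is available, apply classical Fenchel--Rockafellar at each $\varepsilon$, and send $\varepsilon\to 0$ using the dual-side compactness already established. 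Alternatively, by a direct minimax argument: the dual feasible set is weak-$*$ compact and the Lagrangian is bilinear in $(\vect{\vu},\vect{w})$ and $\vect{\phi}$, so a Sion-type theorem exchanges the infimum and supremum, which combined with the existence of primal and dual optimizers yields zero gap.
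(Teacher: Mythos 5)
Your Fenchel--Rockafellar template, your weak (distributional) formulation of the zero-flux boundary condition, and your Arzel\`a--Ascoli argument for dual existence all match the paper's setup. The genuine gap is in the step you yourself flag as subtle: neither of your two workarounds for the failed constraint qualification actually closes it. Mollifying $\vect{\lambda}^0,\vect{\lambda}^1$ does not help, because the function $G$ in your template is the indicator of the singleton $\{\vect{\lambda}^0_\varepsilon-\vect{\lambda}^1_\varepsilon\}$; it has empty interior and is bounded above on no neighborhood of any point, no matter how smooth the data are, so the hypothesis of the classical theorem fails for every $\varepsilon>0$ exactly as it does for $\varepsilon=0$. (Strict feasibility is a Slater-type condition for inequality constraints; it has no analogue for an equality constraint here.) The Sion route is also problematic: the Lagrangian pairing $\int_\Omega\langle\nabla_\vx\vect{\phi}(\vx),\vect{\vu}(d\vx)\rangle$ is not continuous in $\vect{\phi}$ with respect to the uniform topology in which your normalized dual-feasible set is compact, and identifying $\inf\sup$ with the primal value requires the supremum over an unconstrained, hence non-compact, set of potentials. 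Your direct-method argument for primal existence also presupposes primal feasibility, which you have not established independently.

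The paper resolves all of this with one move you are missing: it applies Fenchel--Rockafellar with the roles reversed, taking the dual problem \eqref{vomt-dual}, posed over $C^1(\Omega,\rk)$, as the Fenchel--Rockafellar primal. There the function $g$ is the indicator of the product of norm balls $\{\|a(\vx)\|_{u*}\le 1,\ \|b(\vx)\|_{w*}\le\alpha\}$ in $C(\Omega,\rkd)\times C(\Omega,\rk)$, which has nonempty interior in the sup norm, so $\vect{\phi}=0$ satisfies the constraint qualification trivially; the Riesz--Markov theorem identifies the dual of $C(\Omega,\rkd)$ with $\mathcal{M}(\Omega,\rkd)$, producing the measure-valued fluxes, and the attainment clause of Rockafellar's theorem delivers existence of a solution to \eqref{vomt} (and hence its feasibility) for free, with the boundary condition recovered from the constraint $-\nabla_\vx^*\vect{\vu}\in\mathcal{M}(\Omega,\rk)$. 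This is precisely why the paper dualizes in that direction: the dual of $C(\Omega,\rkd)$ is known while the dual of $\mathcal{M}(\Omega,\rkd)$ is not, and the interiority condition is checkable only on the $C^1$ side.
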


Write $\mathcal{P}(\Omega,\mathbb{R}^k)$ for the space of nonnegative vector-valued densities supported on $\Omega$ with unit mass.
We can use $V(\vect{\lambda}^0,\vect{\lambda}^1)$ 
as a distance measure between $\vect{\lambda}^0,\vect{\lambda}^1\in \mathcal{P}(\Omega,\mathbb{R}^k)$.
\begin{theorem}
\label{thm:vomt-duality}
$V:\mathcal{P}(\Omega,\mathbb{R}^k)\times\mathcal{P}(\Omega,\mathbb{R}^k)\rightarrow \mathbb{R}_+$
defines a metric on $\mathcal{P}(\Omega,\mathbb{R}^k)$.
\end{theorem}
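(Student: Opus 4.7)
The plan is to verify the four metric axioms (nonnegativity, symmetry, identity of indiscernibles, and the triangle inequality) directly from the primal formulation \eqref{vomt}, exploiting that the constraints are linear in $(\vect{\vu},\vect{w})$ and the objective is a sum of integrated norms, hence convex, positively homogeneous, and absolutely homogeneous. Nonnegativity is immediate: any feasible $(\vect{\vu},\vect{w})$ yields a nonnegative integrand, so $V(\vect\lambda^0,\vect\lambda^1)\ge 0$, and also $V(\vect\lambda^0,\vect\lambda^1)<\infty$ because the problem is feasible (for instance, by the dual representation \eqref{vomt-dual} and weak duality, or by constructing an explicit feasible primal point as in the proof of Theorem~\ref{thm:vomt-strong-duality}).

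For symmetry, I would observe that the map $(\vect{\vu},\vect{w})\mapsto(-\vect{\vu},-\vect{w})$ is a bijection between the feasible sets of $V(\vect\lambda^0,\vect\lambda^1)$ and $V(\vect\lambda^1,\vect\lambda^0)$: the PDE constraint $\divg_\vx\vect{\vu}+\divg_\cG\vect{w}=\vect\lambda^0-\vect\lambda^1$ becomes $\divg_\vx(-\vect{\vu})+\divg_\cG(-\vect{w})=\vect\lambda^1-\vect\lambda^0$, the zero-flux boundary condition is preserved under sign change, and the objective value is unchanged because $\|\cdot\|_u$ and $\|\cdot\|_w$ are absolutely homogeneous. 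Taking the infimum on both sides yields $V(\vect\lambda^0,\vect\lambda^1)=V(\vect\lambda^1,\vect\lambda^0)$.

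For the identity of indiscernibles, the forward direction uses $(\vect{\vu},\vect{w})=(0,0)$, which is feasible when $\vect\lambda^0=\vect\lambda^1$ and has objective value $0$. Conversely, if $V(\vect\lambda^0,\vect\lambda^1)=0$, then by Theorem~\ref{thm:vomt-strong-duality} the primal minimum is attained at some $(\vect{\vu}^*,\vect{w}^*)$ with
\[
\int_\Omega \|\vect{\vu}^*(\vx)\|_u+\alpha\|\vect{w}^*(\vx)\|_w\,d\vx=0.
\]
Since $\|\cdot\|_u,\|\cdot\|_w$ are genuine norms (positive definite) and $\alpha>0$, this forces $\vect{\vu}^*=0$ and $\vect{w}^*=0$ almost everywhere, and then the constraint collapses to $\vect\lambda^0=\vect\lambda^1$ in the sense of distributions, hence as densities.

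For the triangle inequality, given any $\vect\lambda^0,\vect\lambda^1,\vect\lambda^2$, let $(\vect{\vu}^{01},\vect{w}^{01})$ and $(\vect{\vu}^{12},\vect{w}^{12})$ be optimizers for $V(\vect\lambda^0,\vect\lambda^1)$ and $V(\vect\lambda^1,\vect\lambda^2)$, respectively. Their sum $(\vect{\vu}^{01}+\vect{\vu}^{12},\vect{w}^{01}+\vect{w}^{12})$ is feasible for $V(\vect\lambda^0,\vect\lambda^2)$: the zero-flux boundary condition and the linear PDE constraint both add (and the right-hand sides telescope as $(\vect\lambda^0-\vect\lambda^1)+(\vect\lambda^1-\vect\lambda^2)=\vect\lambda^0-\vect\lambda^2$). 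The triangle inequality for $\|\cdot\|_u$ and $\|\cdot\|_w$ then gives $V(\vect\lambda^0,\vect\lambda^2)\le V(\vect\lambda^0,\vect\lambda^1)+V(\vect\lambda^1,\vect\lambda^2)$. The only nontrivial ingredient throughout is the existence/attainment step used in the identity of indiscernibles, which is exactly what Theorem~\ref{thm:vomt-strong-duality} supplies; everything else is a short calculation exploiting linearity and norm axioms, so I expect no real obstacle beyond writing down the correct function spaces.
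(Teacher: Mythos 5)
Your proof is correct, and it is essentially the argument the paper has in mind: the paper itself gives no proof of this theorem, remarking only that once $V<\infty$ is known the metric axioms are ``not hard to prove'' and deferring to the reference \cite{CheGeoNinTan17}, where exactly this kind of direct verification (sign-flip for symmetry, superposition of fluxes for the triangle inequality, attainment plus positive-definiteness of the norms for the identity of indiscernibles) is carried out. The only cosmetic point is that in the rigorous formulation \eqref{vomt3} the variables $\vect{\vu},\vect{w}$ are measures, so ``$\vect{\vu}^*=0$ almost everywhere'' should read ``$\vect{\vu}^*=0$ as a measure'' (the objective being the corresponding total-variation-type norm); the logic is otherwise unaffected.
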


%
%
%

\section{Quantum gradient operator and matrix optimal mass transport}\label{sec:matrixOMT}
We closely follow the treatment in \cite{CheGeoTan16}. In particular, we define a notion of gradient on the space of Hermitian matrices and its dual, i.e., the (negative) divergence. 

Some applications of matrix-OMT,
such as diffusion tensor imaging,
have real-valued data
while some applications,
such as multivariate spectral analysis,
have complex-valued data \cite{stoica2005}.
To accommodate the wide range of applications,
we develop the matrix-OMT with complex-valued matrices.

Write $\cC$, $\cH$, and $\cS$ for the set of  $k\times k$ 
complex, Hermitian, and skew-Hermitian matrices respectively.
We write
$\cH_+$ for the set of $k\times k$ positive semidefinite Hermitian matrices, i.e.,
$M\in \cH_+$ if
$v^*Mv\ge 0$ for all $v\in \mathbb{C}^k$.
Write $\trace$ for the trace, i.e. for any $M\in \cH$, we have
$\trace(M)=\sum^k_{i=1}M_{ii}$.

Write $\cC^N$ for the block-column concatenation of $N$ matrices in $\cC$,
i.e., $\mathbf{Z}\in \cC^N$ if
$
\mathbf{Z}=
[Z_1^*\cdots Z_N^*]^*
$
and $Z_1,\dots,Z_N\in \cC$.
Define $\cH^N$ and $\cS^N$ likewise.
For $X,Y\in \cC$, we use the 
Hilbert-Schmidt inner product
\[
\langle X,Y\rangle=\textrm{Re}\trace(XY^*)=
\sum^k_{i=1}
\sum^k_{j=1}(\textrm{Re}X_{ij}\textrm{Re}Y_{ij}+\textrm{Im}X_{ij}\textrm{Im}Y_{ij}).
\]
(This is the standard inner product when 
we view $\cC$ as the
real vector space $\mathbb{R}^{2n^2}$.)
For $X\in \cC$, we use the norm
$\|X\|_2=(\langle X,X\rangle)^{1/2}$.
For $\mathbf{X}, \mathbf{Y}\in \cC^N$, we use the inner product
	$\langle \mathbf{X},\mathbf{Y}\rangle=\sum_{s=1}^N \langle X_s,Y_s\rangle$.
	

\subsection{Quantum gradient and divergence operators}
We define the gradient operator, given
a $\mathbf{L}=[L_1,\cdots,L_\ell]^*\in\cH^\ell$, as
	\begin{equation*}
		\nabla_\mathbf{L}: \cH \rightarrow {\cS}^\ell, ~~X \mapsto
		\left[ \begin{array}{c}
		L_1 X-X L_1\\
		\vdots \\
		L_\ell X-X L_\ell
		\end{array}\right].
	\end{equation*}
Define the divergence operator as
	\begin{equation*}
		\divg_\mathbf{L}: {\cS}^\ell \rightarrow \cH,~~Z=
		\left[ \begin{array}{c}
		Z_1\\
		\vdots \\
		Z_\ell
		\end{array}\right]
		\mapsto
		\sum_{s=1}^\ell -L_s Z_s+Z_s L_s.
	\end{equation*}
Note that $\divg_\mathbf{L}=-\nabla_\mathbf{L}^*$, where
$\nabla_\mathbf{L}^*$ is the adjoint of $\nabla_\mathbf{L}$.
This is in analogy with the usual spatial gradient and divergence operators.
Write $\Delta_\mathbf{L}=\divg_\mathbf{L}\nabla_\mathbf{L}$.
A standing assumption throughout, is that the null space of $\nabla_\mathbf{L}$, denoted by ${\rm ker}(\nabla_\mathbf{L})$, contains only scalar multiples of the identity matrix $I$.



The choice of $\mathbf{L}$ affects $\nabla_\mathbf{L}$ and, in turn, affects the induced metric.
This definition of gradient operator is inspired by the Lindblad equation in Quantum Mechanics \cite{CheGeoTan16}. There has been some work on similar matrix optimal transport theories which focus on the applications in physics \cite{CARLEN20171810,Mittnenzweig2017}.
However, how to appropriately choose $\mathbf{L}$ to fit a specific engineering application
is still a topic of open research, and there is currently no established
standard choice. 


\subsection{Matrix optimal mass transport}
We say $\Lambda:\Omega\rightarrow \cH_+$
is a nonnegative matrix-valued density with unit mass if
\[
	\int_\Omega \trace(\Lambda(\vx))\;d\vx=1.
\]
Assume $\Lambda^0$ and $\Lambda^1$ are nonnegative matrix-valued densities supported on $\Omega$ with unit mass.

We define the optimal mass transport between matrix-valued densities as
    \begin{equation}\label{momt}
M(\Lambda^0,\Lambda^1)=
\left(
\begin{array}{ll}
\underset{\vU,\vW}{\text{minimize}}&
\int_{\Omega} \|\vU(\vx)\|_u +\alpha\|\vW(\vx)\|_w\;d\vx\\
\mbox{subject to} &
\divg_\vx (\vU)(\vx)
+
\divg_\vL(\vW(\vx))=\Lambda^0(\vx)-\Lambda^1(\vx)\\
&\vU \text{ satisfies zero-flux b.c.}
\end{array}
\right)
\end{equation}
where 
 $\vU:\Omega\rightarrow \cH^{d}$
 and $\vW:\Omega\rightarrow \cS^{\ell}$
 are the optimization variables,
$\alpha>0$ is a parameter,
and $\|\cdot\|_u$ is a norm on $\cH^{d}$ and $\|\cdot\|_w$ is a norm on $\cS^{\ell}$.
The parameter $\alpha$ represents the relative importance of the two flux terms $\vU$ and $\vW$.
 We write
 \[
 \vU=
 \begin{bmatrix}
 U_1\\\vdots\\U_d
 \end{bmatrix}
 \qquad
 \vW=
\begin{bmatrix}
 W_1\\\vdots\\W_\ell
 \end{bmatrix}
 \quad
 \vu_{ij}=
 \begin{bmatrix}
( U_1)_{ij}\\
\vdots\\
( U_d)_{ij}
 \end{bmatrix}.
 \]
We define the spatial divergence as
\[
\divg_\vx(\vU)=
 \begin{bmatrix}
 \divg_\vx(\vu_{11})& \divg_\vx(\vu_{12})&\cdots&\divg_\vx(\vu_{1k})\\
\divg_\vx(\overline{ \vu}_{12})&\ddots&&\vdots\\
\vdots &&\ddots&\vdots\\
\divg_\vx(\overline{\vu}_{1k})&\divg_\vx(\overline{\vu}_{2k})&\cdots &\divg_\vx(\vu_{k,k})
 \end{bmatrix}.
 \]
 The zero-flux boundary condition is
 \[
\vu_{ij}(\vx)
^T\mathbf{n}(\vx)=0,\,\,
\mbox{for all }\begin{cases}
\vx\in \partial \Omega,\\
\text{$ \mathbf{n}(\vx)$ normal to $\partial\Omega$}.
\end{cases}
\]
 for $i,j=1,\dots,k$.
Note that $\vW$ has no boundary conditions
\cite{CheGeoNinTan17}.

The optimization problem \eqref{momt} has the following dual problem:
\begin{equation}
M(\Lambda^0,\Lambda^1)=
\left(
\begin{array}{ll}
\underset{\Phi}{\text{maximize}}& \int_{\Omega}
\langle \Phi(\vx),\Lambda^1(\vx)
 		-\Lambda^0(\vx)\rangle \;d\vx\\
\mbox{subject to} &\|\nabla_\vx \Phi(\vx)\|_{u*} \le 1\\
&\|\nabla_\mathbf{L} \Phi(\vx)\|_{w*} \le \alpha
\quad\text{for all } \vx\in \Omega,
\end{array}
\right)
\label{momt-dual}
\end{equation}
where the optimization variable
 $\Phi:\Omega \rightarrow \cH$ is a function.
We write $\|\cdot\|_{u*}$ and 
$\|\cdot\|_{w*}$ for the dual norms of 
$\|\cdot\|_{u}$ and 
$\|\cdot\|_{w}$, respectively.


As stated in Theorem~\ref{thm:momt-strong-duality}, strong duality holds between
\eqref{momt} and \eqref{momt-dual} 
in the sense that the minimized
and maximized objective values are equal.
Therefore we take 
either
\eqref{momt} or \eqref{momt-dual} 
as the definition of $M$.

In Section~\ref{ss:m-omt-rigor}, we rigorously define the primal and dual problems.
To prove the results on matrix-OMT
(Theorems~\ref{thm:momt-strong-duality} and \ref{thm:m-omt-duality})
one can take
the arguments of Section~\ref{s-duality-proof} that prove 
the results on vector-OMT
(Theorems~\ref{thm:vomt-strong-duality} and \ref{thm:vomt-duality})
and change only the notation.
We therefore simply point this out, instead of repeating same argument.

%

%
 
%
%
%
%

\subsection{Theoretical properties}
The algorithm we present in Section~\ref{sec:algorithm} is a primal-dual algorithm and, as such, finds 
solutions to both
Problems~\eqref{momt} and \eqref{momt-dual}.
This is well-defined as both the
primal and dual problems have solutions.
\begin{theorem}
\label{thm:momt-strong-duality}
The (infinite dimensional)
primal and dual optimization problems
\eqref{momt} and \eqref{momt-dual}
have solutions,
and their optimal values are the same, i.e., strong duality holds.
\end{theorem}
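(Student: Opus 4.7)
The plan is to mirror the proof of Theorem~\ref{thm:vomt-strong-duality} (which is rigorously established in Section~\ref{s-duality-proof}), exploiting the fact that matrix-OMT fits into the same abstract Fenchel--Rockafellar template as vector-OMT. First I would observe that $\cH$, $\cS$, $\cH^d$, and $\cS^\ell$, equipped with the Hilbert--Schmidt inner product, are finite-dimensional real Hilbert spaces, so the functional-analytic tools used in the vector setting apply here verbatim. The quantum divergence $\divg_\vL$ plays the role of the graph divergence $\divg_\cG$, and the two structural properties the vector proof leans on, namely $\divg_\vL=-\nabla_\vL^*$ and a one-dimensional kernel $\ker \nabla_\vL = \mathbb{R}\cdot I$, are exactly the standing assumptions listed in Section~\ref{sec:matrixOMT}.

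For primal existence I would set up \eqref{momt} on the space of $\cH^d$-valued (resp.\ $\cS^\ell$-valued) bounded Radon measures on $\Omega$, with the divergence constraint interpreted weakly by pairing against Hermitian test fields. The objective $\int \|\vU\|_u + \alpha\|\vW\|_w\,d\vx$ is coercive in total variation and weakly-$*$ lower semicontinuous, so a minimizing sequence is norm-bounded and Banach--Alaoglu yields a weakly-$*$ convergent subsequence whose limit is feasible (the constraint being defined by continuous dual pairings) and optimal. Feasibility of \eqref{momt} itself reduces to the range condition $\int_\Omega \trace(\Lambda^0 - \Lambda^1)\,d\vx = 0$, which is immediate from unit mass, combined with the fact that $\divg_\vx$ removes the spatial mean entrywise while $\divg_\vL$ removes the pointwise trace via $\ker \nabla_\vL = \mathbb{R}\cdot I$; splitting $F=\Lambda^0-\Lambda^1$ as its trace part plus its trace-free part gives the decomposition explicitly.

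For strong duality I would write the primal in Fenchel--Rockafellar form as $\inf_{\vU,\vW} F(\vU,\vW) + G(A(\vU,\vW))$, where $F$ is the integrated sum of norms, $A(\vU,\vW) = \divg_\vx \vU + \divg_\vL \vW$, and $G$ is the indicator of the singleton $\{\Lambda^0-\Lambda^1\}$. The conjugate $F^*$ is the indicator of a product of dual-norm $L^\infty$ balls, and $A^*\Phi = (\nabla_\vx\Phi,\nabla_\vL\Phi)$ by the adjoint identities, which produces exactly the dual \eqref{momt-dual}. Existence of a dual optimizer then follows from the weak-$*$ compactness of the feasible set of \eqref{momt-dual} (it is bounded in $L^\infty$ by the two dual-norm constraints, modulo an additive $\mathbb{R}\cdot I$-valued constant in $\Phi$ that annihilates the pairing with $\Lambda^1-\Lambda^0$) together with weak-$*$ continuity of the linear objective.

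The main obstacle, and the only step needing genuine care, will be verifying the constraint qualification underlying Fenchel--Rockafellar duality so that the infimum is attained with no duality gap. Concretely, one must exhibit a primal-feasible $(\vU,\vW)$ at which the constraint map is continuous into an appropriate function space, which is exactly what the range computation in the second paragraph supplies. Past that point no new analytic phenomenon appears: the Hermitian test-function space is finite-dimensional, the norms $\|\cdot\|_u$ and $\|\cdot\|_w$ on $\cH^d$ and $\cS^\ell$ behave like arbitrary norms on finite-dimensional real Hilbert spaces, and all the Hahn--Banach and measure-theoretic arguments from the vector proof transfer essentially unchanged.
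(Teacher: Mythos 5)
Your overall plan---transplanting the Section~\ref{s-duality-proof} argument to the matrix setting using the finite-dimensionality of $\cH$ and $\cS$ under the Hilbert--Schmidt inner product and the identities $\divg_\vL=-\nabla_\vL^*$, $\ker\nabla_\vL=\mathbb{R}\cdot I$---is the right one, and your direct-method argument for primal existence and your feasibility decomposition (trace part absorbed by $\divg_\vx$, pointwise trace-free part by $\divg_\vL$) are sound. The genuine gap is in the strong-duality step: you apply Fenchel--Rockafellar in the opposite orientation from the paper, taking the flux problem over measures as the primal with $G$ the indicator of the singleton $\{\Lambda^0-\Lambda^1\}$. In that orientation the constraint qualification of Theorem~\ref{thm:duality} cannot hold: the indicator of a singleton is never bounded above on a neighborhood of any point, and your proposed substitute---exhibiting a feasible point at which the constraint map is continuous---is not a constraint qualification; continuity of $A$ plus feasibility does not exclude a duality gap for an infinite-dimensional equality constraint. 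A second, related obstruction is that with $(\vU,\vW)$ ranging over measures, the Fenchel dual variable a priori lives in the dual of the space of $\cH$-valued measures, which is precisely the space the paper declines to characterize; you would still owe a regularity argument showing that the abstract dual optimizer is a $W^{1,\infty}$ (or $C^1$) potential, i.e., that the abstract dual coincides with \eqref{momt-dual}.

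The paper avoids both problems by reversing the roles: it treats the potential problem (the matrix analogue of \eqref{vomt-dual}, posed over $C^1$ Hermitian-valued potentials) as the Fenchel--Rockafellar primal, so that $g$ is the indicator of a product of dual-norm balls, which is bounded above (indeed zero) on a neighborhood of $L(0)=0$; the hypothesis of Theorem~\ref{thm:duality} then holds trivially at $\Phi=0$. Rockafellar's theorem delivers, in one stroke, the absence of a duality gap together with attainment in the measure-valued flux problem \eqref{momt}---so no separate direct-method existence proof or explicit feasibility construction is needed---while existence for \eqref{momt-dual} is obtained separately by Arzel\`a--Ascoli from the uniform Lipschitz bound implied by the gradient constraints. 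To repair your argument, either flip the direction as the paper does, or replace the interiority condition by a constraint qualification adapted to equality constraints (e.g., an Attouch--Br\'ezis closed-range condition) and supply the missing regularity of the dual variable; as written, the no-gap claim is not established.
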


Write $\mathcal{P}(\Omega,\cH_+)$ for the space of nonnegative matrix-valued densities supported on $\Omega$ with unit mass.
We can use $M(\Lambda^0,\Lambda^1)$ 
as a distance measure between $\Lambda^0,\Lambda^1\in \mathcal{P}(\Omega,,\cH_+)$.
\begin{theorem} 
\label{thm:m-omt-duality}
$M:\mathcal{P}(\Omega,\cH_+)\times \mathcal{P}(\Omega,\cH_+)\rightarrow \mathbb{R}_+$
defines a metric on $\mathcal{P}(\Omega,\cH_+)$.
\end{theorem}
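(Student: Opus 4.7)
The plan is to verify the four metric axioms (nonnegativity, symmetry, triangle inequality, identity of indiscernibles) directly from the primal formulation \eqref{momt}, invoking the existence part of Theorem~\ref{thm:momt-strong-duality} at the one place where an attained optimum is needed. Nonnegativity is immediate, since the integrand $\|\vU(\vx)\|_u+\alpha\|\vW(\vx)\|_w$ is pointwise nonnegative. For symmetry, I would observe that the map $(\vU,\vW)\mapsto(-\vU,-\vW)$ is a bijection between the feasible sets of $M(\Lambda^0,\Lambda^1)$ and $M(\Lambda^1,\Lambda^0)$: it flips the sign of the right-hand side $\Lambda^0-\Lambda^1$, respects the linear zero-flux boundary condition, preserves membership in $\cH^d\times\cS^\ell$, and leaves the objective unchanged by absolute homogeneity of the norms.

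For the triangle inequality, given $\Lambda^0,\Lambda^1,\Lambda^2\in\mathcal{P}(\Omega,\cH_+)$ I would select primal optimizers $(\vU_{01},\vW_{01})$ and $(\vU_{12},\vW_{12})$ for $M(\Lambda^0,\Lambda^1)$ and $M(\Lambda^1,\Lambda^2)$, whose existence is guaranteed by Theorem~\ref{thm:momt-strong-duality}. Linearity of $\divg_\vx$ and $\divg_\vL$ together with linearity of the zero-flux boundary condition makes the sum $(\vU_{01}+\vU_{12},\vW_{01}+\vW_{12})$ feasible for $M(\Lambda^0,\Lambda^2)$, since the right-hand sides telescope to $\Lambda^0-\Lambda^2$. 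Applying the norm triangle inequality pointwise in $\vx$ and integrating then yields $M(\Lambda^0,\Lambda^2)\le M(\Lambda^0,\Lambda^1)+M(\Lambda^1,\Lambda^2)$.

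The hard part will be identity of indiscernibles. One direction is straightforward: if $\Lambda^0=\Lambda^1$, then $(\vU,\vW)=(0,0)$ is feasible with objective value $0$, so $M(\Lambda^0,\Lambda^1)=0$. For the converse, I would assume $M(\Lambda^0,\Lambda^1)=0$ and again invoke Theorem~\ref{thm:momt-strong-duality} to extract a primal optimizer $(\vU^*,\vW^*)$ whose objective equals $0$. Because $\|\cdot\|_u$ and $\|\cdot\|_w$ are genuine norms, the nonnegative integrand must vanish almost everywhere, forcing $\vU^*=0$ and $\vW^*=0$ a.e.\ on $\Omega$. Substituting into the divergence constraint gives $\Lambda^0-\Lambda^1=0$ in the distributional sense in which \eqref{momt} has been posed. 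The delicate point is upgrading this distributional equality to equality of the matrix-valued densities in $\mathcal{P}(\Omega,\cH_+)$; this is where the rigorous functional-analytic setup in Section~\ref{s-duality-proof}, transplanted from the vector case to the Hermitian-matrix setting, must be brought to bear. Note that without the attainment statement in Theorem~\ref{thm:momt-strong-duality} we would only know that the \emph{infimum} equals $0$ along some minimizing sequence, giving no direct handle on the fluxes themselves, so the existence of a genuine primal optimizer is essential to this step.
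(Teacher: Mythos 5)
Your proposal is correct and follows the standard axiom-by-axiom verification via the flux formulation (sign flip for symmetry, summing feasible fluxes for the triangle inequality, vanishing objective forcing the constraint to read $\Lambda^0=\Lambda^1$ for positivity), which is precisely the argument the paper itself does not write out but delegates to the cited reference \cite{CheGeoNinTan17}. The only quibble is your claim that attainment of the primal optimum is \emph{essential} for the identity of indiscernibles: a minimizing sequence $(\vU_n,\vW_n)$ whose total variations tend to zero already gives $\divg_\vx\vU_n+\divg_\vL\vW_n\to 0$ weakly-* against $C^1$ test functions, since these divergences are bounded linear images of the fluxes, so the conclusion $\Lambda^0=\Lambda^1$ follows even without an attained optimizer.
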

%
%
%
%
%
%

%
%
%
%
%

\section{Duality proof}
\label{s-duality-proof}
In this section, we establish the theoretical results.
For notational simplicity, we only prove the results for
the vector-OMT primal and dual problems \eqref{vomt} and \eqref{vomt-dual}.
Analogous results for the matrix-OMT primal and dual problems \eqref{momt} and \eqref{momt-dual}
follow from the same logic.

Although the classical scalar-OMT literature is very rich, 
standard techniques for proving scalar-OMT duality do not simply apply to our setup.
For example, Villani's proof of strong duality, presented as Theorem~1.3 of \cite{Vil03},
relies on and works with the linear optimization formulation \eqref{Monge}.
However, our vector and matrix-OMT formulations
directly generalize the flux formulation \eqref{Kan1}
and do not have formulations analogous to \eqref{Monge}.
We need a direct approach to analyze duality between the flux formulation
and the dual (with one function variable), and we provide this in this section.

We further assume
$\Omega\subset \rd$ 
has a piecewise smooth boundary. Write $\Omega^\circ$ and $\partial \Omega$ for the interior and boundary of $\Omega$.
For simplicity, assume $\Omega$ as full affine dimensions, i.e., $\overline{\Omega^\circ}=\Omega$.

The rigorous form of the dual problem \eqref{vomt-dual} is
\begin{equation}\label{vomt-dual2}
\begin{array}{ll}
\underset{\vect{\phi}\in W^{1,\infty}(\Omega,\rk)}{\text{maximize}}& \int_{\Omega}
\langle \vect{\phi}(\vx),\vect{\lambda}^1(\vx)
 		-\vect{\lambda}^0(\vx)\rangle \;d\vx\\
\mbox{subject to} &\esssup_{\vx\in \Omega}\|\nabla_\vx \vect{\phi}(\vx)\|_{u*} \le 1\\
&\sup_{\vx\in \Omega}\|\nabla_\cG \vect{\phi}(\vx)\|_{w*} \le \alpha,
\end{array}
\end{equation}
where $W^{1,\infty}(\Omega,\rk)$ is the standard Sobolev space of
functions from $\Omega$ to $\rk$ with bounded weak gradients.
That \eqref{vomt-dual2} has a solution directly follows from the Arzel\`a-Ascoli Theorem.

To rigorously define the primal problem \eqref{vomt} requires more definitions,
and we do so later as \eqref{vomt3}.


\subsection{Fenchel-Rockafellar duality}
Let $L:X\rightarrow Y$ be a continuous linear map between locally convex topological vector spaces $X$ and $Y$,
and let
$f:X\rightarrow\mathbb{R}\cup\{\infty\}$ and $g:Y\rightarrow\mathbb{R}\cup\{\infty\}$ be lower-semicontinuous convex functions. 
Write
\begin{align*}
d^\star=\sup_{x\in X}\{-f(x)-g(Lx)\}\qquad
p^\star=\inf_{y^*\in Y^*}\{f^*(L^*y^*)+g^*(-y^*)\},
\end{align*}
where
\begin{align*}
f^*(x^*)=\sup _{x\in X}\{\langle x^*,x\rangle-f(x)\}\qquad
g^*(y^*)=\sup _{y\in Y}\{\langle y^*,y\rangle-g(y)\}.
\end{align*}

In this framework of Fenchel-Rockafellar duality, 
$d^\star\le p^\star$, i.e., weak duality, holds unconditionally, and this is not difficult to prove.
On the other hand, $d^\star= p^\star$, i.e., strong duality, requires additional assumptions and is more difficult to prove.
The following theorem does this with a condition we can use.
\begin{theorem}\label{thm:duality}[Theorem~17 and 18 \cite{rockafellar1974}]
If there is an $x\in X$ such that $f(x)<\infty$ and $g$ is bounded above in a neighborhood of $Lx$,
then $p^\star=d^\star$.
Furthermore, if $p^\star=d^\star<\infty$, 
the infimum of $\inf_{y^*\in Y^*}\{f^*(L^*y^*)+g^*(-y^*)\}$ is attained.
\end{theorem}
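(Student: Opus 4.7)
The plan is to reduce both the primal and dual optimal values to values of a single convex perturbation function $h:Y\to\mR\cup\{\pm\infty\}$ defined by
\[
h(y)=\inf_{x\in X}\bigl\{f(x)+g(Lx+y)\bigr\}.
\]
Three elementary observations set up the argument. First, $h$ is convex, as the infimal projection onto the second coordinate of the jointly convex function $(x,y)\mapsto f(x)+g(Lx+y)$. Second, $-d^\star=h(0)$ by inspection. Third, a short Legendre-transform computation---swapping the order of $\sup$ and $\inf$ and substituting $z=Lx+y$---yields
\[
h^*(y^*)=f^*(-L^*y^*)+g^*(y^*),
\]
so that $p^\star=-h^{**}(0)$ and weak duality $d^\star\le p^\star$ is just the universal inequality $h^{**}(0)\le h(0)$.

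The heart of the argument is to upgrade this to $h(0)=h^{**}(0)$. Using the qualification hypothesis, I would pick $x_0\in X$ with $f(x_0)<\infty$ together with an open neighborhood $V$ of $Lx_0$ on which $g\le M$. Then for every $y$ in the open neighborhood $V-Lx_0$ of $0\in Y$,
\[
h(y)\le f(x_0)+g(Lx_0+y)\le f(x_0)+M,
\]
so $h$ is bounded above on a neighborhood of $0$. The classical fact that a convex function on a locally convex space which is bounded above on some neighborhood of a point is continuous at that point then gives continuity, hence lower semicontinuity, of $h$ at $0$; since $h(0)\in\mR$ as well, the Fenchel--Moreau biconjugate theorem yields $h(0)=h^{**}(0)$, which is exactly $d^\star=p^\star$.

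For the attainment claim, assume $p^\star=d^\star<\infty$. Continuity of the convex function $h$ at $0$ implies that the subdifferential $\partial h(0)$ is nonempty. Any $y^*\in\partial h(0)$ satisfies the Fenchel equality $h(0)+h^*(y^*)=\langle y^*,0\rangle=0$, so $h^*(y^*)=-h(0)=p^\star$. Substituting the formula for $h^*$ above and setting $u^*=-y^*$ shows that $u^*$ attains the infimum defining $p^\star$.

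The step I expect to be most delicate is the passage from boundedness-above on a neighborhood to continuity in a general locally convex space, since outside of Banach or seminormed settings this requires a one-variable reduction along lines through $0$ using a balanced neighborhood. I would handle it by quoting the relevant results from Chapter 1 of Rockafellar's monograph rather than reproducing the line reduction, which is standard; everything else in the argument is a routine manipulation of conjugates and subdifferentials once the perturbation function $h$ has been introduced.
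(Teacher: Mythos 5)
The paper does not actually prove this statement --- it is quoted verbatim as Theorems~17 and 18 of Rockafellar's monograph \cite{rockafellar1974} and used as a black box --- so there is no internal proof to compare against. Your perturbation-function argument is correct and is, in substance, exactly the machinery of the cited source: Rockafellar's development is built on the value function $h(y)=\inf_x\{f(x)+g(Lx+y)\}$, the identities $h(0)=-d^\star$ and $h^{**}(0)=-p^\star$, and the fact that boundedness above of $h$ near $0$ (which your constraint qualification delivers) forces continuity and hence $\partial h(0)\neq\emptyset$. Your sign bookkeeping checks out, including the substitution $u^*=-y^*$ that matches the paper's convention $p^\star=\inf_{y^*}\{f^*(L^*y^*)+g^*(-y^*)\}$. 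Two small remarks. First, you assert $h(0)\in\mathbb{R}$ without comment: the hypothesis rules out $h(0)=+\infty$ (since $f(x_0)+g(Lx_0)<\infty$), but not a priori $h(0)=-\infty$; in that case $d^\star=+\infty$ and weak duality already gives $p^\star=d^\star=+\infty$ with the attainment clause vacuous, so you should dispose of this degenerate case explicitly before invoking continuity. Second, once you know $\partial h(0)\neq\emptyset$ you get both conclusions at once --- any $y^*\in\partial h(0)$ satisfies $h^*(y^*)=-h(0)$, which simultaneously proves $h^{**}(0)=h(0)$ and exhibits a minimizer of $p^\star$ --- so the separate appeal to Fenchel--Moreau is not needed and slightly obscures that the equality and the attainment come from the same subgradient.
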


\subsection{Spaces}
Throughout this section, $\|\cdot\|_1$, $\|\cdot\|_2$, and $\|\cdot\|_3$ denote unspecified finite dimensional norms.
As all finite dimensional norms are equivalent, we do not bother to precisely specify which norms they are.


Define
\[
C(\Omega,\rk)=
\left\{\vect{\phi}:\Omega\rightarrow \rk\,\,\Big|\,\,
\vect{\phi}\text{ is continuous},\,
\|\vect{\phi}\|_{\infty}=\max_{\vx\in \Omega}\|\vect{\phi}(\vx)\|_1<\infty
\right\}.
\]
Then $C(\Omega,\rk)$ is a Banach space 
equipped with the norm $\|\cdot \|_{\infty}$.
We define $C(\Omega,\rkd)$ likewise.
If $\vect{\phi}\in C(\Omega,\rk)$ is continuously differentiable, 
$\nabla_\vx \vect{\phi}$ is defined on $\Omega^\circ$.
We say $\nabla_\vx\vect{\phi}$ has a continuous extension to $\Omega$,
if there is a $g\in C(\Omega,\rkd)$ such that
$g|_{\Omega^\circ}=\nabla_\vx \vect{\phi}$.
Define
\begin{align*}
C^1(\Omega,\rk)=
\Big\{\vect{\phi}:\rd\rightarrow \rk\,\,\big|\,\,&
\vect{\phi}\text{ is continuously differentiable on }\Omega^\circ,\\
&\nabla_\vx \vect{\phi}\text{ has a continuous extension to }\Omega,\\
&\|\vect{\phi}\|_{\infty,\infty}=\max_{\vx\in \Omega}\|\vect{\phi}(\vx)\|_2
+\sup_{\vx\in \Omega}\|\nabla_\vx\vect{\phi}(\vx)\|_3<\infty
\Big\}.
\end{align*}
Then $C^1(\Omega,\rk)$ is a Banach space 
equipped with the norm $\|\cdot\|_{\infty,\infty}$.

Write $\mathcal{M}(\Omega,\rk)$ for the space of $\rk$-valued signed finite Borel measures on $\Omega$,
and define $\mathcal{M}(\Omega,\rkd)$ likewise.
Write $(C(\Omega,\rkd))^*$, $(C^1(\Omega,\rk))^*$ for the topological dual of 
$C(\Omega,\rkd)$, $C^1(\Omega,\rk)$, respectively.
The standard Riesz-Markov theorem tells us that
$(C(\Omega,\rkd))^*=\mathcal{M}(\Omega,\rkd)$.

Fully characterizing $(C^1(\Omega,\rk))^*$ is hard, but we do not need to do so.
Instead, we only use the following simple fact.
Any $g\in \mathcal{M}(\Omega,\rk)$ defines the bounded linear map
$\vect{\phi}\mapsto \int_\Omega \langle\vect{\phi}(\vx),g(d\vx)\rangle$
for any $\vect{\phi}\in C^1(\Omega,\rk)$. In other words, 
$\mathcal{M}(\Omega,\rkd)\subset(C^1(\Omega,\rk))^*$ with the appropriate identification.

\subsection{Operators}
We redefine 
$\nabla_\vx:C^1(\Omega,\rk)\rightarrow C(\Omega,\rkd)$
so that $\nabla_\vx\vect{\phi}$ is the continuous extension of the usual $\nabla_\vx\vect{\phi}$ to all of $\Omega$.
This makes $\nabla_\vx$ a bounded linear operator.
Define the dual (adjoint) operator
$\nabla_\vx^*:\mathcal{M}(\Omega,\rkd)\rightarrow (C^1(\Omega,\rk))^*$ by
\[
\int_\Omega \langle\vect{\phi}(\vx),(\nabla_\vx^*\vect{\vu})(d\vx)\rangle
=
\int_\Omega \langle(\nabla_\vx \vect{\phi})(\vx),\vect{\vu}(d\vx)\rangle
\]
for any $\vect{\phi}\in C^1(\Omega,\rk)$ and $\vect{\vu}\in \mathcal{M}(\Omega,\rkd)$.

Define the $\nabla_\cG$ (which is simply a multiplication by a $\mathbb{R}^{k\times\ell}$ matrix) as
$\nabla_\cG:C^1(\Omega,\rk)\rightarrow C(\Omega,\mathbb{R}^\ell)$.
Since $C^1(\Omega,\mathbb{R}^\ell)\subset C(\Omega,\mathbb{R}^\ell)$, there is nothing wrong with defining the range of $\nabla_\cG$
to be $C(\Omega,\mathbb{R}^\ell)$,
and this still makes $\nabla_\cG$ a bounded linear operator.
Define the dual (adjoint) operator
$\nabla_\cG^*:\mathcal{M}(\Omega,\mathbb{R}^\ell)\rightarrow (C^1(\Omega,\rk))^*$ 
by identifying $\nabla_\cG^*$ with the transpose of the matrix that defines $\nabla_\cG$.
Since $\nabla_\cG^*$ is simply multiplication by a matrix,
we can further say
\[
\nabla_\cG^*:\mathcal{M}(\Omega,\mathbb{R}^\ell)\rightarrow \mathcal{M}(\Omega,\rk)\subset(C^1(\Omega,\rk))^*.
\]
We write $\divg_\cG=-\nabla_\cG^*$.

\subsection{Zero-flux boundary condition}
Let $\vect{\mathbf{m}}:\Omega\rightarrow \rkd$ a smooth function.
Then integration by parts tells us that 
\[
\int_\Omega\langle\nabla_\vx\vect{\varphi}(\vx),\vect{\mathbf{m}}(\vx)\rangle\;d\vx=
-\int_\Omega\langle \vect{\varphi}(\vx),\divg_\vx\vect{\mathbf{m}}(\vx)\rangle\;d\vx
\]
holds for all smooth $\vect{\varphi}:\Omega\rightarrow \rk$ if and only if
$\vect{\mathbf{m}}(\vx)$ satisfies the zero-flux boundary condition, i.e.,
$\vect{\mathbf{m}}(\vx)\mathbf{n}(\vx)=0$ for all $\vx\in \partial \Omega$,
where $\mathbf{n}(\vx)$ denotes the normal
vector at $\vx$.
Here $\divg_\vx$ denotes the usual (spatial) divergence.
To put it differently, 
$\nabla_\vx^*=-\divg_\vx$ holds when the zero-flux boundary condition holds.


We generalize this notion to measures. We say 
$\vect{\vu}\in \mathcal{M}(\Omega,\rkd)$
satisfies the zero-flux boundary condition in the weak sense if
there is a $\vect{g}\in \mathcal{M}(\Omega,\rk)\subset(C^1(\Omega,\rk))^* $ such that
\[
\int_\Omega\langle\nabla_\vx\vect{\phi} (\vx),\vect{\mathbf{u}}(d\vx)\rangle=
-\int_\Omega\langle \vect{\phi}(\vx),\vect{g}(d\vx)\rangle
\]
holds for all $\vect{\phi}\in C^1(\Omega,\rk)$.
In other words, $\vect{\vu}$ satisfies the zero-flux boundary condition 
if $\nabla_\vx^*\vect{\vu}\in\mathcal{M}(\Omega,\rk)\subset(C^1(\Omega,\rk))^*$.
In this case, we write $\divg_\vx(\vect{\vu})=\vect{g}$
and $\divg_\vx(\vect{\vu})=-\nabla_\vx^*(\vect{\vu})$.
This definition is often used in elasticity theory.

\subsection{Duality}
To establish duality, we view the dual problem
\eqref{vomt-dual}
as the primal problem and obtain the primal problem
\eqref{vomt} as the dual of the dual.
We do this because the dual of $C(\Omega,\rkd)$ is known, while the 
dual of $\mathcal{M}(\Omega,\rkd)$ is difficult to characterize.

Consider the problem
\begin{equation}
\begin{array}{ll}
\underset{\vect{\phi}\in C^1(\Omega,\rk)}{\text{maximize}}& \int_{\Omega}
\langle \vect{\phi}(\vx),\vect{\lambda}^1(\vx)
 		-\vect{\lambda}^0(\vx)\rangle \;d\vx\\
\mbox{subject to} &\|\nabla_\vx \vect{\phi}(\vx)\|_{u*} \le 1\\
&\|\nabla_\cG \vect{\phi}(\vx)\|_{w*} \le \alpha
\quad\text{for all } \vx\in \Omega,
\end{array}
\label{vomt-dual3}
\end{equation}
which is equivalent to \eqref{vomt-dual2}.
Define 
\begin{align*}
L:C^1(\Omega,\rk)&\rightarrow C(\Omega,\rkd)\times C(\Omega,\mathbb{R}^\ell)\\
\vect{\phi}&\mapsto (\nabla_\vx\vect{\phi},\nabla_\cG\vect{\phi})
\end{align*}
and
\[
g(a,b)=\left\{
\begin{array}{ll}
0&\text{if } \|a(\vx)\|_{u*}\le1,\,\|b(\vx)\|_{w*}\le\alpha\text{ for all }\vx\in \Omega\\
\infty&\text{otherwise}.
\end{array}
\right.
\]
Rewrite \eqref{vomt-dual3} as
\[
\begin{array}{ll}
\underset{\vect{\phi}\in C^1(\Omega,\rk)}{\text{maximize}}& \int_{\Omega}
\langle \vect{\phi}(\vx),\vect{\lambda}^1(\vx)
 		-\vect{\lambda}^0(\vx)\rangle \;d\vx
 		-g(L\vect{\phi}),
\end{array}
\]
and consider its Fenchel-Rockafellar dual
\begin{equation*}
\begin{array}{ll}
\underset{
\substack{
\vect{\vu}\in \mathcal{M}(\Omega,\rkd)\\
\vect{w}\in \mathcal{M}(\Omega,\rk)
}
}{\text{minimize}}&
\int_{\Omega}\|\vect{\vu}(\vx)\|_u +\alpha\|\vect{w}(\vx)\|_w\;d\vx\\
\mbox{subject to} &
-\nabla_\vx^*\vect{\vu}-\nabla_\cG^*\vect{w}=\vect{\lambda}^0-\vect{\lambda}^1
\text{ as members of }(C^1(\Omega,\rk))^*.
\end{array}
\end{equation*}

The constraint
\[
-\nabla_\vx^*\vect{\vu}=\vect{\lambda}^0-\vect{\lambda}^1-\divg_\cG\vect{w}\in\mathcal{M}(\Omega,\rk)\subset (C^1(\Omega,\rk))^*
\]
implies 
\[
-\nabla_\vx^*\vect{\vu}\in \mathcal{M}(\Omega,\rk),
\]
i.e., $\vect{\vu}$ satisfies the zero-flux boundary condition.

We now state the rigorous form of the primal problem \eqref{vomt}
\begin{equation}\label{vomt3}
\begin{array}{ll}
\underset{
\substack{
\vect{\vu}\in \mathcal{M}(\Omega,\rkd)\\
\vect{w}\in \mathcal{M}(\Omega,\rk)
}
}{\text{minimize}}&\int_{\Omega}\|\vect{\vu}(\vx)\|_u +\alpha\|\vect{w}(\vx)\|_w\;d\vx\\
\mbox{subject to} &
\divg_\vx\vect{\vu}+\divg_\cG\vect{w}=\vect{\lambda}^0-\vect{\lambda}^1
\text{ as members of }\mathcal{M}(\Omega,\rk)\\
& \vect{\vu} \text{ satisfies zero-flux b.c in the weak sense}.
\end{array}
\end{equation}

The point $\vect{\phi}=0$ satisfies the assumption of Theorem~\ref{thm:duality}.
Furthermore, it is easy to verify that the optimal value of the dual problem \eqref{vomt-dual} is bounded.
This implies strong duality, \eqref{vomt3} is feasible, and \eqref{vomt3} has a solution.

For $V(\vect{\lambda}^0,\vect{\lambda}^1)$ to be a metric, the 4 metric axioms must be satisfied.
These are relatively straightforward to prove, and interested readers can find the argument in \cite{CheGeoNinTan17}.
However, one more implicit axiom (not shown in previous works) must be shown:
 $V(\vect{\lambda}^0,\vect{\lambda}^1)<\infty$ for all $\vect{\lambda}^0,\vect{\lambda}^1\in \mathcal{P}(\Omega,\mathbb{R}^k)$.
 This follows from the fact that \eqref{vomt3} has a solution.
 This completes the proof that
$V:\mathcal{P}(\Omega,\mathbb{R}^k)\times \mathcal{P}(\Omega,\mathbb{R}^k)\rightarrow\mathbb{R}_+$
defines a metric.

\subsection{Formal setup of Matrix-OMT}
\label{ss:m-omt-rigor}
The rigorous form of the primal problem \eqref{momt} is 
    \begin{equation*}
\begin{array}{ll}
\underset{
\substack{\vU\in \mathcal{M}(\Omega,\cH^d)\\\vW\in \mathcal{M}(\Omega,\cS^\ell)}}{\text{minimize}}&
\int_{\Omega} \|\vU(\vx)\|_u +\alpha\|\vW(\vx)\|_w\;d\vx\\
\mbox{subject to} &
\divg_\vx (\vU)
+
\divg_\vL\vW=\Lambda^0-\Lambda^1
\text{ as members of }\mathcal{M}(\Omega,\cH)\\
&\vU \text{ satisfies zero-flux b.c in the weak sense,}
\end{array}
\end{equation*}
and the rigorous form of the dual problem \eqref{momt-dual} is
\begin{equation*}
\begin{array}{ll}
\underset{\Phi\in W^{1,\infty}(\Omega,\cH)}{\text{maximize}}& \int_{\Omega}
\langle \Phi(\vx),\Lambda^1(\vx)
 		-\Lambda^0(\vx)\rangle \;d\vx\\
\mbox{subject to} &\|\nabla_\vx \Phi(\vx)\|_{u*} \le 1\\
&\|\nabla_\mathbf{L} \Phi(\vx)\|_{w*} \le \alpha
\quad\text{for all } \vx\in \Omega,
\end{array}
\end{equation*}
where the measure and Sobolev spaces
are defined similarly.
Theorems~\ref{thm:momt-strong-duality} and \ref{thm:m-omt-duality}
follow from the exact same reasoning as that of the vector counterparts.

\section{Algorithmic preliminaries}
\label{s:alg-prelim}
%

Consider the Lagrangian for the vector optimal transport problems \eqref{vomt} and its dual \eqref{vomt-dual}
\begin{align}
L(\vect{\vu},\vect{w},\vect{\phi})=&\int_\Omega \|\vect{\vu}(\vx)\|_u+\alpha\|\vect{w}(\vx)\|_w\;d\vx
\nonumber\\
&\qquad\qquad+
\int_\Omega 
\langle\vect{\phi}(\vx),
\divg_\vx(\vect{\vu})(\vx)
+\divg_\cG(\vect{w}(\vx))
-\vect{\lambda}^0(\vx)+\vect{\lambda}^1(\vx)\rangle\;d\vx,
\label{L-vomt}
\end{align}
which is convex with respect to $\vect{\vu}$ and $\vect{w}$ and concave with respect to $\phi$.

Finding a saddle point of \eqref{L-vomt} is equivalent to solving \eqref{vomt} and  \eqref{vomt-dual},
when the primal problem \eqref{vomt} has a solution, the dual problem \eqref{vomt-dual}  has a solution,
and the optimal values of  \eqref{vomt} and  \eqref{vomt-dual}  are equal.
See \cite[Theorem~7.1]{bauschke2012}, \cite[Theorem 2]{liu2017}, or any reference on standard
convex analysis such as  \cite{rockafellar1974} for further discussion on this point.

To solve the optimal transport problems, we discretize the continuous problems and 
apply PDHG method, which we soon describe, to solve the discretized convex-concave saddle point problem.

\subsection{PDHG method}
Consider the convex-concave saddle function
\[
L(x,y,z)=f(x)+g(y)+\langle Ax+By,z\rangle-h(z),
\]
where
$f$, $g$, and $h$ are (closed and proper) convex functions and 
$x\in \mathbb{R}^n$, $y\in \mathbb{R}^m$, $z\in \mathbb{R}^l$,
$A\in \mathbb{R}^{l\times n}$, and $B\in \mathbb{R}^{l\times m}$.
Note $L$ is convex in $x$ and $y$ and concave in $z$.
Assume $L$ has a saddle point
and step sizes $\mu,\nu,\tau>0$ satisfy
\[
1>\tau\mu \lambda_\mathrm{max}(A^TA)+\tau \nu \lambda_\mathrm{max}(B^TB).
\]
Write $\|\cdot\|_2$ for the standard Euclidean norm.
Then the method
\begin{align}
x^{k+1}&=
\argmin_{x\in \mathbb{R}^n}
\left\{
L(x,y^k,z^k) +\frac{1}{2\mu}\|x-x^k\|_2^2
\right\}\nonumber\\
y^{k+1}&=
\argmin_{y\in \mathbb{R}^m}
\left\{
L(x^k,y,z^k) +\frac{1}{2\nu}\|y-y^k\|_2^2
\right\}
\label{cp-method}\\
z^{k+1}&=
\argmax_{z\in \mathbb{R}^l}
\left\{
L(2x^{k+1}-x^k,2y^{k+1}-y^k,z) -\frac{1}{2\tau}\|z-z^k\|_2^2
\right\}\nonumber
\end{align}
converges to a saddle point.
This method is called  
the Primal-Dual Hybrid Gradient (PDHG) method
or the 
(preconditioned) Chambolle-Pock method
\cite{esser2010,ChaPoc11,PockCha11}.

PDHG can be interpreted as a proximal point method
under a certain metric \cite{he2012}.
The quantity
\begin{align*}
R^k=&
\frac{1}{\mu}\|x^{k+1}-x^k\|_2^2+\frac{1}{\mu}\|y^{k+1}-y^k\|_2^2+
\frac{1}{\tau}\|z^{k+1}-z^k\|_2^2\nonumber\\
\qquad\qquad&-2\langle z^{k+1}-z^k,A(x^{k+1}-x^k)+B(y^{k+1}-y^k)\rangle.
\end{align*}
is the fixed-point residual of the non-expansive mapping 
defined by the proximal point method.
Therefore $R^k=0$ if and only if $(x^k,y^k,z^k)$
is a saddle point of $L$,
and $R^k$ decreases monotonically to $0$, cf., review paper \cite{ryu2016}.
We can use $R^k$ as a measure of progress
and as a termination criterion.

\subsection{Shrink operators}
\label{ss:shrink}
As the subproblems of PDHG \eqref{cp-method}
are optimization problems themselves,
PDHG is most effective when these subproblems have closed-form solutions.

The problem definitions of scalar, vector, and matrix-OMT involve norms. 
For some, but not all, choices of norms, the ``shrink'' operators
\begin{equation*}
\srk(x^0;\mu)=
\argmin_{x\in \mathbb{R}^n}
\left\{
 \mu\|x\|+(1/2)\|x-x^0\|_2^2
\right\}
\end{equation*}
have closed-form solutions.
Therefore, when possible, it is useful to choose such norms for computational efficiency.
Readers familiar with the compressed sensing or proximal methods literature
may be familiar with this notion.


For the vector-OMT, we focus on norms
\[
\|\vect{\vu}\|_2^2=
\sum^d_{s=1}
\|\vu_s\|_2^2
\qquad
\|\vect{\vu}\|_{1,2}=
\sum^d_{s=1}
\|\vu_s\|_2
\qquad
\|\vect{\vu}\|_1=
\sum^k_{s=1}
\|\vu_s\|_1
\]
for $\vect{\vu}\in \mathbb{R}^{k\times d}$ and 
\[
\|\vect{w}\|_2^2=
\sum^\ell_{s=1}
(w_s)^2
\qquad
\|\vect{w}\|_1=
\sum^\ell_{s=1}
|w_s|
\]
for $\vect{w}\in \mathbb{R}^{\ell}$.
The shrink operators of these norms have closed-form solutions.

For the matrix-OMT, we focus on norms
\[
\|\vU\|_2^2=
\sum^d_{s=1}
\sum^k_{i,j=1}
|(U_s)_{ij}|^2
\,
\|\vU\|_1=
\sum^d_{s=1}
\sum^k_{i,j=1}
|(U_s)_{ij}|
\quad
\|\vU\|_\mathrm{1,nuc}=
\sum^d_{s=1}
\|U_s\|_\mathrm{nuc}
\]
for $\vU\in \cH^{d}$ and 
$\|\cdot\|_2$, $\|\cdot\|_1$,  and $\|\cdot\|_{1,\mathrm{nuc}}$
for $\vW\in \cS^\ell$, which are defined likewise.
The nuclear norm $\|\cdot\|_\mathrm{nuc}$ is the sum of the singular values.
The shrink operators of these norms have closed-form solutions.

 We provide further information and details on shrink operators in the appendix.

\section{Algorithms}\label{sec:algorithm}
We now present simple and parallelizable algorithms for the OMT problems.
These algorithms are, in particular, very well-suited for GPU computing.

In Section~\ref{sec:algorithm} and \ref{sec:examples}
we use an $n\times n$ discretization of the 2D domain $[0,1]\times[0,1]$
to obtain approximate solutions to the continuous problems.
For simplicity of notation,
we use the same symbol to denote the discretized variables and their continuous counterparts.
Whether we are referring to the continuous variable or its discretization should be clear from  context.

%
%
%

As mentioned in Section~\ref{s:alg-prelim}, these methods are the PDHG method
applied to discretizations of the continuous problems.
In the implementation, it is important to get the discretization at the boundary correct
in order to respect the zero-flux boundary conditions.
For interested readers, the details are provided in the appendix.

Instead of detailing the somewhat repetitive derivations of the algorithms in full,
we simply show the key steps and arguments for the $\vect{\vu}$ update of vector-OMT.
The other steps follow from similar logic.

When we discretize the primal and dual vector-OMT problems and
apply PDHG to the discretized Lagrangian form of  \eqref{L-vomt}, we get
\begin{align*}
\vect{\vu}^{k+1}&=
\argmin_{\vect{\vu}\in \mathbb{R}^{n\times n\times k\times d}}
\left\{
\sum_{ij}
\|\vect{\vu}_{ij}\|_u
+\langle \vect{\phi}_{ij},(\divg_\vx \vu)_{ij}\rangle
 +\frac{1}{2\mu}\|\vect{\vu}_{ij}-\vect{\vu}^k_{ij}\|_2^2
\right\}\\
&=
\argmin_{\vect{\vu}\in \mathbb{R}^{n\times n\times k\times d}}
\left\{\sum_{ij}
\mu\|\vect{\vu}_{ij}\|_u
-\mu\langle (\nabla_\vx\vect{\phi})_{ij},\vu_{ij}\rangle
 +(1/2)\|\vect{\vu}_{ij}-\vect{\vu}_{ij}^k\|_2^2
\right\}.
\end{align*}
Since the minimization splits over the $i,j$ indices, we write
\begin{align*}
\vect{\vu}^{k+1}_{ij}
&=
\argmin_{\vect{\vu}_{ij}\in \mathbb{R}^{ k\times d}}
\left\{
\mu\|\vect{\vu}_{ij}\|_u
-\mu\langle (\nabla_\vx\vect{\phi})_{ij},\vu_{ij}\rangle
 +(1/2)\|\vect{\vu}_{ij}-\vect{\vu}_{ij}^k\|_2^2
\right\}\\
&=
\argmin_{\vect{\vu}_{ij}\in \mathbb{R}^{ k\times d}}
\left\{
\mu\|\vect{\vu}_{ij}\|_u
 +(1/2)\|\vect{\vu}_{ij}-(\vect{\vu}^k_{ij}+\mu(\nabla_\vx \vect{\phi})_{ij})\|_2^2
\right\}\\
&=\srk(\vect{\vu}^k_{ij}+\mu(\nabla_\vx \vect{\phi})_{ij};\mu).
\end{align*}
At the boundary, these manipulations need special care.
When we incorporate ghost cells in our discretization,
these seemingly cavalier manipulations are also correct on the boundary.
We further explain the ghost cells and discretization 
 in the appendix.

%

\subsection{Scalar-OMT algorithm}
\label{ss:somt-algorithm}
The scalar-OMT algorithm can be viewed 
as a special case of vector-OMT or matrix-OMT algorithms.
This scalar-OMT algorithm was presented in \cite{LiRyuOsh17},
but we restate it here for completeness.

\begin{tabbing}
aaaaa\= aaa \=aaa\=aaa\=aaa\=aaa=aaa\kill  
   \rule{\linewidth}{0.8pt}\\
   \noindent{\large\bf First-order Method for S-OMT}\\
  \1 \textbf{Input}: Problem data $\lambda^0$, $\lambda^1$\\
    \3 Initial guesses $\vu^0$, $\phi^0$ and step sizes $\mu$, $\tau$\\
  \1 \textbf{Output}: Optimal $\vu^\star$ and $\phi^\star$\\
\rule{\linewidth}{0.5pt}\\
  \1 \For $k=1, 2, \cdots$ \qquad \textrm{(Iterate until convergence)}\\
  \2 $\vu^{k+1}_{ij}=\srk(\vu_{ij}^k+\mu(\nabla \Phi^k)_{ij}, \mu)$ 
  \qquad for $i,j=1,\dots,n$\\
  \2  $\phi_{ij}^{k+1}=\phi_{ij}^k+\tau (\divg_\vx (2\vu^{k+1}-\vu^k)_{ij}+\lambda^1_{ij}-\lambda^0_{ij})$ \qquad for $i,j=1,\dots,n$\\ 
  \1 \End\\
   \rule{\linewidth}{0.8pt}
\end{tabbing}

This method converges for step sizes $\mu,\tau>0$ that satisfy
\[
1>\tau\mu \lambda_\mathrm{max}(-\Delta_\vx).
\]
For the particular setup of $\Omega=[0,1]\times[0,1]$ and $\Delta x=1/(n-1)$,
the bound $\lambda_\mathrm{max}(-\Delta_\vx)\le 8/(\Delta x)^2=8(n-1)^2$ is known.
In our experiments, we use $\mu =1/(16\tau(n-1)^2)$,
a choice that ensures convergence for any $\tau>0$.
We tune $\tau$ for the fastest convergence.

\subsection{Vector-OMT algorithm}
\label{ss:vomt-algorithm}
Write $\srk_u$ and $\srk_w$ for the shrink operators with respect to $\|\cdot\|_u$ and $\|\cdot\|_w$.
The vector-OMT algorithm is as follows:

\begin{tabbing}
aaaaa\= aaa \=aaa\=aaa\=aaa\=aaa=aaa\kill  
   \rule{\linewidth}{0.8pt}\\
   \noindent{\large\bf First-order Method for V-OMT}\\
  \1 \textbf{Input}: Problem data $\cG$, $\vect{\lambda}^0$, $\vect{\lambda}^1$, $\alpha$\\
    \3 Initial guesses $\vect{\vu}^0$, $\vect{w}^0$, $\vect{\phi}^0$ and step sizes $\mu$, $\nu$, $\tau$\\
  \1 \textbf{Output}: Optimal $\vect{\vu}^\star$, $\vect{w}^\star$, and $\vect{\phi}^\star$\\
\rule{\linewidth}{0.5pt}\\
  \1 \For $k=1, 2, \cdots$ \qquad \textrm{(Iterate until convergence)}\\
  \2 $\vect{\vu}^{k+1}_{ij}=\srk_u(\vect{\vu}_{ij}^k+\mu(\nabla \vect{\phi}^k)_{ij}, \mu)$ 
  \qquad \quad for $i,j=1,\dots,n$\\
\2 $\vect{w}^{k+1}_{ij}=\srk_w(\vect{w}^k_{ij}+\nu(\nabla_\cG\vect{\phi}^k_{ij}),\alpha\nu)$   \qquad for $i,j=1,\dots,n$\\
  \2  $\vect{\phi}_{ij}^{k+1}=\vect{\phi}_{ij}^k+\tau (\divg_\vx (2\vect{\vu}^{k+1}-\vect{\vu}^k)_{ij}+
  \divg_\cG (2\vect{w}^{k+1}-\vect{w}^k)_{ij}  +\vect{\lambda}^1_{ij}-\vect{\lambda}^0_{ij})$\\
\4 \qquad \qquad \qquad\qquad\qquad\qquad\qquad\qquad\qquad\qquad\qquad  for $i,j=1,\dots,n$\\ 
  \1 \End\\
   \rule{\linewidth}{0.8pt}
\end{tabbing}

This method converges for step sizes $\mu,\nu,\tau>0$  that satisfy
\[
1>\tau\mu \lambda_\mathrm{max}(-\Delta_\vx)
+\tau \nu \lambda_\mathrm{max}(-\Delta_\cG).
\]
For the particular setup of $\Omega=[0,1]\times[0,1]$ and $\Delta x=1/(n-1)$,
the bound $\lambda_\mathrm{max}(-\Delta_\vx)\le 8(n-1)^2$ is known.
Given a graph $\cG$, we can compute 
$\lambda_\mathrm{max}(-\Delta_\cG)$ with a standard eigenvalue routine.
In our experiments, we use
$\mu =1/(32\tau(n-1)^2)$
and
$\nu =1/(4\tau\lambda_\mathrm{max}(-\Delta_\cG))$,
a choice that ensures convergence for any $\tau>0$.
We tune $\tau$ for the fastest convergence.

\subsection{Matrix-OMT algorithm}
\label{ss:momt-algorithm}
Write $\srk_u$ and $\srk_w$ for the shrink operators with respect to $\|\cdot\|_u$ and $\|\cdot\|_w$.
The matrix-OMT algorithm is as follows:

\begin{tabbing}
aaaaa\= aaa \=aaa\=aaa\=aaa\=aaa=aaa\kill  
   \rule{\linewidth}{0.8pt}\\
   \noindent{\large\bf First-order Method for M-OMT}\\
  \1 \textbf{Input}: Problem data $\mathbf{L}$, $\Lambda^0$, $\Lambda^1$, $\alpha$, \\
    \3 Initial guesses $\vU^0$, $\vW^0$, $\Phi^0$ and step sizes $\mu$, $\nu$, $\tau$\\
  \1 \textbf{Output}: Optimal $\vU^\star$, $\vW^\star$, and $\Phi^\star$\\
\rule{\linewidth}{0.5pt}\\
  \1 \For $k=1, 2, \cdots$ \qquad \textrm{(Iterate until convergence)}\\
  \2 $\vU^{k+1}_{ij}=\srk_u(\vU_{ij}^k+\mu(\nabla \Phi^k)_{ij}, \mu)$ 
  \qquad\quad \,\,for $i,j=1,\dots,n$\\
\2 $\vW^{k+1}_{ij}=\srk_w(\vW^k_{ij}+\nu(\nabla_\mathbf{L}\Phi^k_{ij}),\alpha\nu)$   \qquad for $i,j=1,\dots,n$\\
  \2  $\Phi_{ij}^{k+1}=\Phi_{ij}^k+\tau (\divg_\vx (2\vU^{k+1}-\vU^k)_{ij}+
  \divg_\mathbf{L} (2\vW^{k+1}-\vW^k)_{ij}  +\Lambda^1_{ij}-\Lambda^0_{ij})$\\
\4 \qquad \qquad \qquad\qquad\qquad\qquad\qquad\qquad\qquad\qquad\qquad for $i,j=1,\dots,n$\\ 
  \1 \End\\
   \rule{\linewidth}{0.8pt}
\end{tabbing}

This method converges for step sizes $\mu,\nu,\tau>0$  that satisfy
\[
1>\tau\mu \lambda_\mathrm{max}(-\Delta_\vx)
+\tau \nu \lambda_\mathrm{max}(-\Delta_\mathbf{L}).
\]
For the particular setup of $\Omega=[0,1]\times[0,1]$ and $\Delta x=1/(n-1)$,
the bound $\lambda_\mathrm{max}(-\Delta_\vx)\le 8(n-1)^2$ is known.
Given $\mathbf{L}$, we can compute the value of
$\lambda_\mathrm{max}(-\Delta_\mathbf{L})$
by explicitly forming a $k^2\times k^2$ matrix
representing the linear operator $-\Delta_\mathbf{L}$
and applying a standard eigenvalue routine.
In our experiments, we use
$\mu =1/(32\tau(n-1)^2)$
and 
$\nu =1/(4\tau\lambda_\mathrm{max}(-\Delta_\mathbf{L}))$,
a choice that ensures convergence for any $\tau>0$.
We tune $\tau$ for the fastest convergence.

\begin{figure}[h!]
\centering
\subfloat[$\vect{\lambda}^0$]{\includegraphics[width=0.49\textwidth]{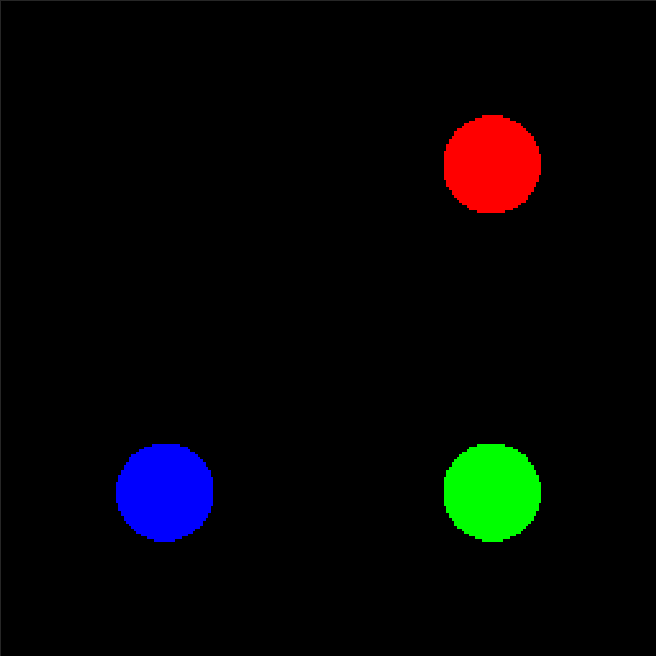}
 \label{fig:vec_marginal0}}
\subfloat[$\vect{\lambda}^1$]{\includegraphics[width=0.49\textwidth]{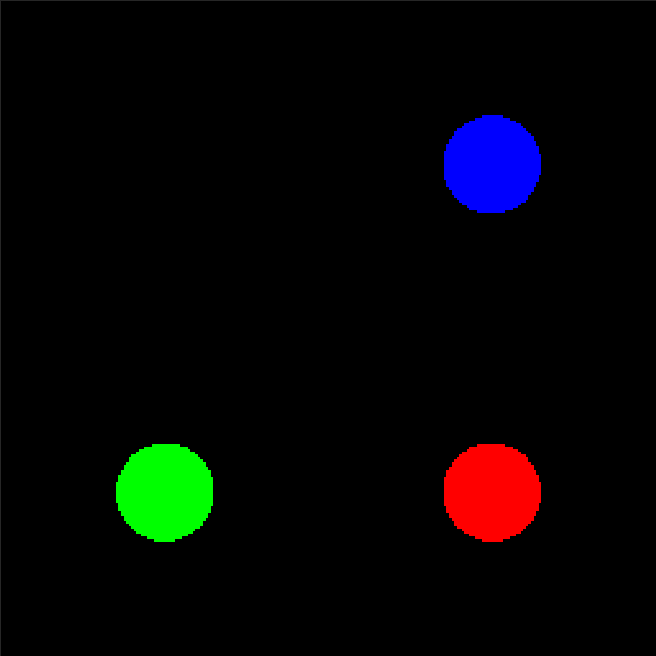}
 \label{fig:vec_marginal1}}\\
 \vspace{-0.05in}
\subfloat[Velocity field $\vu$ with $c_1=1$, $c_2=1$, $c_3=1$,
 $\alpha=1$ 
 \newline
 and  $\|\vect{\vu}\|_{1,2}, \|\vect{w}\|_{1}$. 
$V(\vect{\lambda}^0,\vect{\lambda}^1)=0.57$.
]{\includegraphics[width=0.49\textwidth]{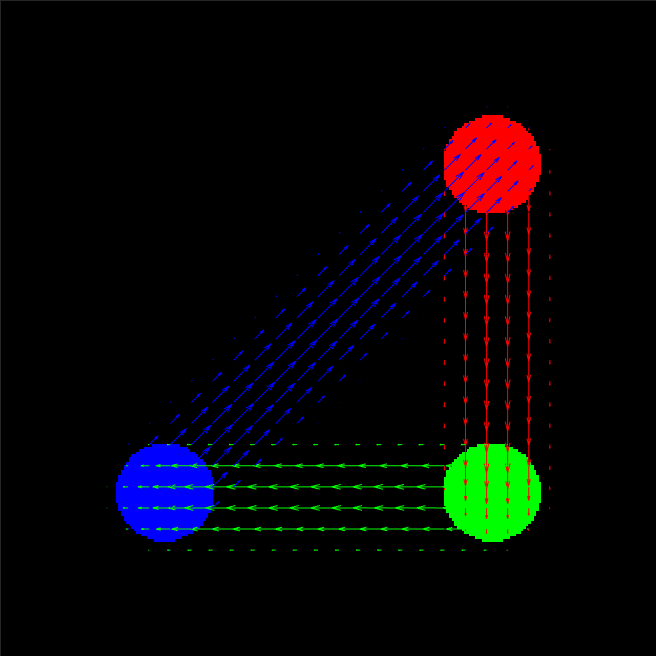}
 \label{fig:velocityuL2a}
}
\subfloat[Velocity field $\vu$ with $c_1=1$, $c_2=1$, 
 $c_3=1$, $\alpha=1$ 
 \newline and $\|\vect{\vu}\|_{1}, \|\vect{w}\|_{1}$.
$ V(\vect{\lambda}^0,\vect{\lambda}^1)=0.67$.
]{\includegraphics[width=0.49\textwidth]{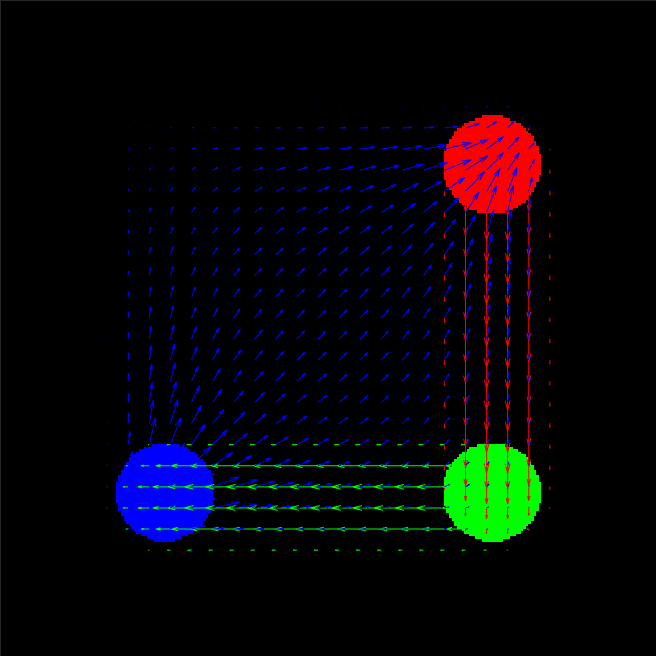}
 \label{fig:velocityuL2b}
}
\caption{Color image vector-OMT example.}
 \label{fig:velocityuL2}
\end{figure}

\subsection{Parallelization}
For the vector-OMT, 
the computation for the  $\vect{\vu}$, $\vect{w}$, and $\vect{\phi}$ updates splits over the indices $(i,j)$,
i.e., the computation splits pixel-by-pixel. Furthermore, the $\vect{\vu}$ and $\vect{w}$ updates can be done in parallel.
Parallel processors handing jobs split over
 $(i,j)$ must be synchronized before and after the $\vect{\phi}$ update.
The same is true for the scalar-OMT and matrix-OMT.

This highly parallel and regular algorithmic structure makes 
the proposed algorithms very well suited for CUDA GPUs.
We demonstrate this through our experiments.

\section{Examples}\label{sec:examples}
In this section, we provide example applications of vector and matrix-OMT
with numerical tests to demonstrate the effectiveness of our algorithms.  
As mentioned in the introduction,
potential applications of vector and matrix-OMT are broad. Here, 
we discuss two of the simplest applications.

We implemented the algorithm on C++ CUDA and ran it on a Nvidia GPU.
For convenience, we MEXed this code, i.e., the code is made available as a Matlab function.
For scientific reproducibility, we release this code.

\subsection{Color images}
Color images in RGB format is one of the more immediate examples of vector-valued densities.
At each spatial position of a 2D color image, the color is represented as a combination of the three basic colors
red (R), green (G), and blue (B). 
We allow any basic color to change to another basic color with cost
$c_1$ for R to G, $c_2$ for R to B, and $c_3$ for G to B.
So the graph $\cG$ as described in Section~\ref{sec:vectorgrad} has 3 nodes and 3 edges.

Consider the two color images on the domain $\Omega=[0,1]\times [0,1]$
with $256\times 256$ discretization shown in
Figures~\ref{fig:vec_marginal0} and \ref{fig:vec_marginal1}.
The initial and target densities $\vect{\lambda}^0$ and $\vect{\lambda}^1$
both have three disks at the same location, but with different colors.
The optimal flux depends on the choice of norms.
Figures~\ref{fig:velocityuL2a} and \ref{fig:velocityuL2b},
show fluxes optimal with respect to different norms.



Whether it is optimal to spatially transport the colors or to change the colors 
depends on the parameters $c_1, c_2, c_3, \alpha$ as well as the norms $\|\cdot\|_u$ and $\|\cdot\|_w$.
With the parameters of Figure~\ref{fig:colorringa} it is optimal to spatially transport the colors,
while with the parameters of Figure~\ref{fig:colorringb} it is optimal to change the colors.

\begin{figure}[h]
\centering
\subfloat[$c_1=1, c_2=1, c_3=1,\alpha =10$.]{\includegraphics[width=0.49\textwidth]{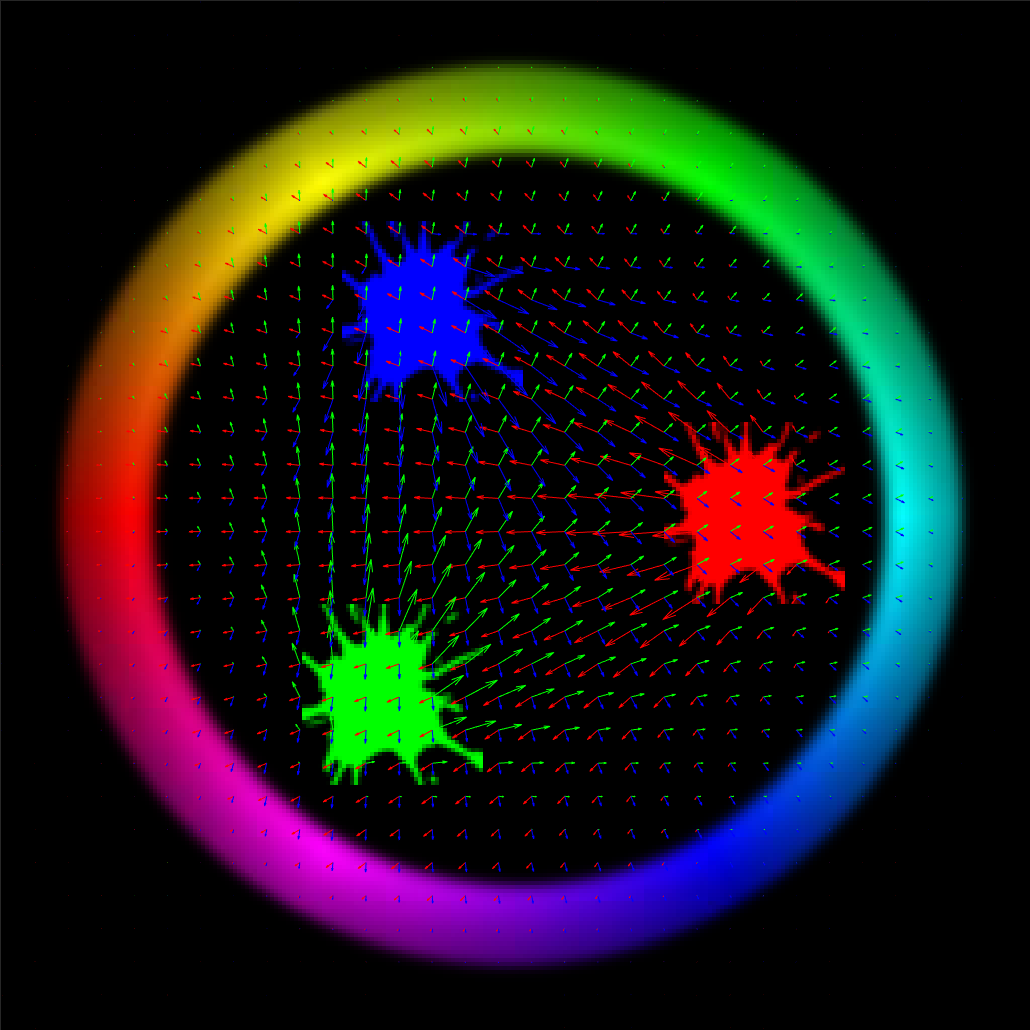}
\label{fig:colorringa}
}
\subfloat[$c_1=1, c_2=1, c_3=1,\alpha =0.1$.]{\includegraphics[width=0.49\textwidth]{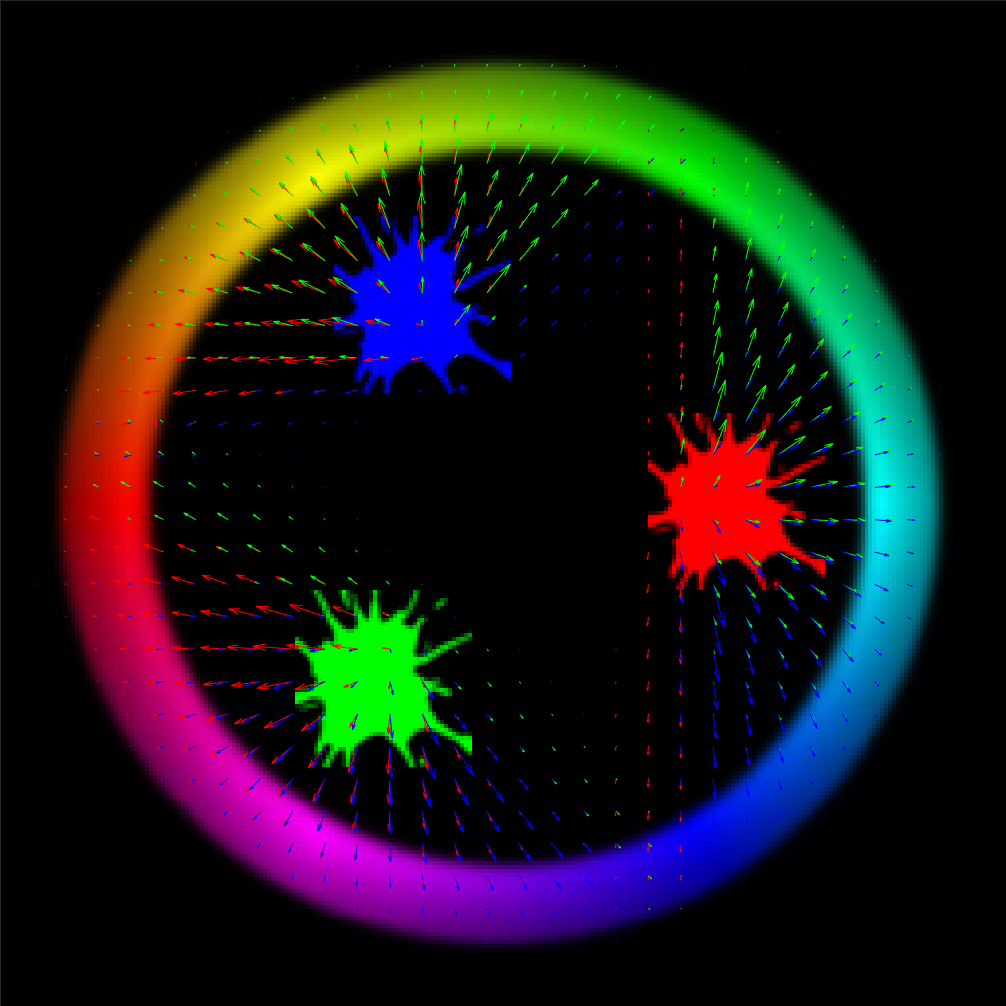}
\label{fig:colorringb}
}
 \caption{Color image vector-OMT example, with a more complicated shape
 and norms $\|\vect{\vu}\|_{1,2}, \|\vect{w}\|_{2}$.
 }
 \label{fig:colorring}
\end{figure}

Finally, we test our algorithm 
on the setup of Figure~\ref{fig:velocityuL2a}
with grid sizes $32\times 32, 64 \times 64, 128 \times 128$, and $256 \times 256$. 
Table~\ref{tab:vecgrid1} shows the number of iterations and runtime tested on a Nvidia Titan Xp GPU
required to achieve a $10^{-3}$ precision, measured as the ratio between the duality gap and primal objective value.
%
\begin{table}
\centering
\begin{tabular}[h]{| c | c | c | c |c| c| c|}
\hline
Grid Size & Iteration count & Time per-iter& Iteration time&$\tau$ & $V(\vect{\lambda}^0,\vect{\lambda}^1)$ \\
\hline
$32 \times 32$ & $0.5\times 10^4$  & $27 \mu s$ &$0.14s$& 1  & 0.57\\
$64 \times 64$ & $0.5\times 10^4$  & $27 \mu s$ &$0.14s$& 1  & 0.57\\
$128 \times 128$ & $2\times 10^4$ & $23 \mu s$ &$0.47s$& 3  & 0.57\\
$256 \times 256$ & $2\times 10^4$ &$68 \mu s$ & $1.36s$&3  & 0.57\\
\hline
\end{tabular}
\caption{Computation cost for vector-OMT as a function of grid size.}
\label{tab:vecgrid1}
\end{table}

\subsection{Diffusion tensor imaging}
Diffusion tensor imaging (DTI) is a technique used in magnetic resonance imaging.
DTI captures orientations and intensities of brain fibers at each spatial position as ellipsoids
and gives us a matrix-valued density.
Therefore, the metric defined by the matrix-OMT 
provides a natural way to compare the differences between brain diffusion images.
In Figure~\ref{fig:dti} we visualize diffusion tensor images
by using colors to indicate different orientations of the tensors at each voxel.
In this paper, we present simple 2D examples as a proof of concept
and leave actual 3D imaging for a topic of future work.
\begin{figure}[h]
\centering
\includegraphics[width=0.49\textwidth]{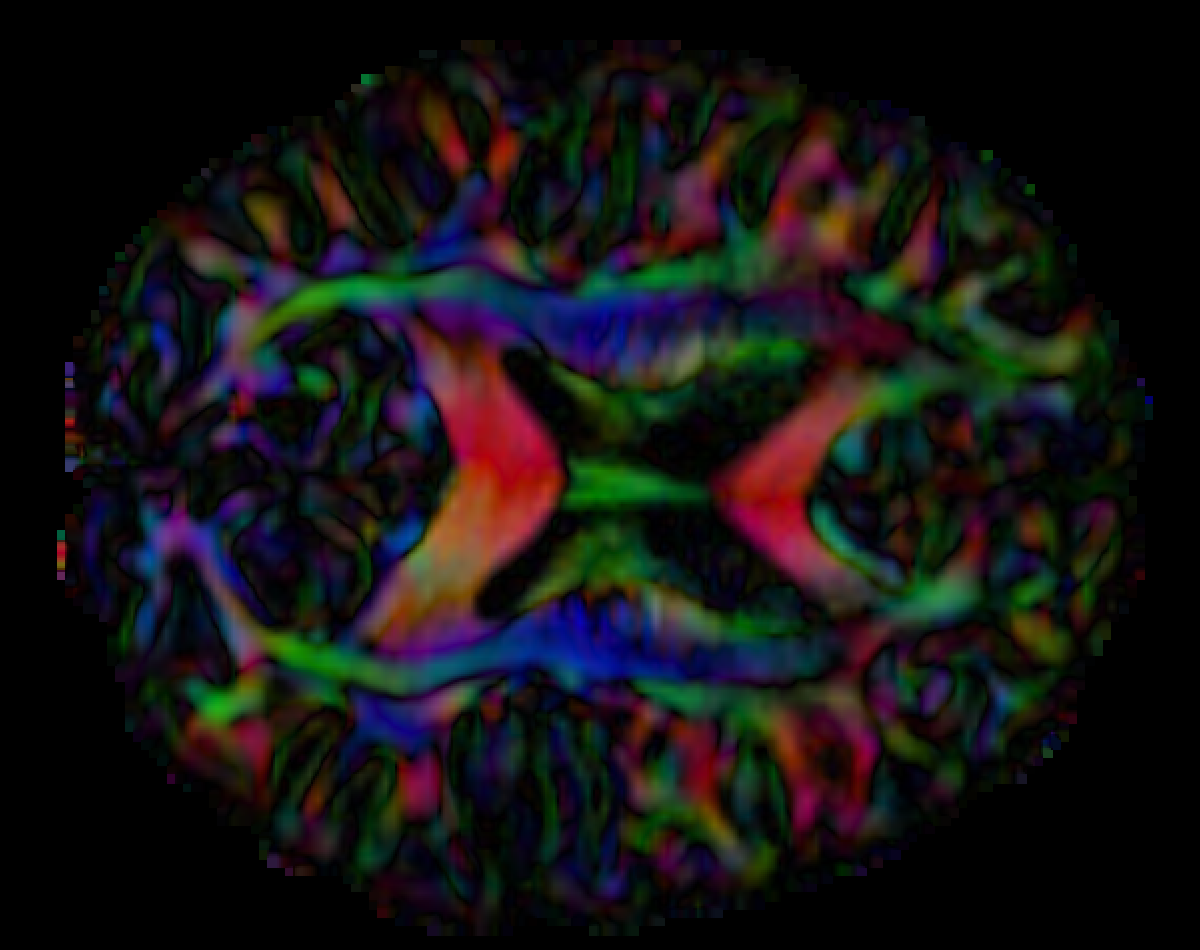}
\includegraphics[width=0.49\textwidth]{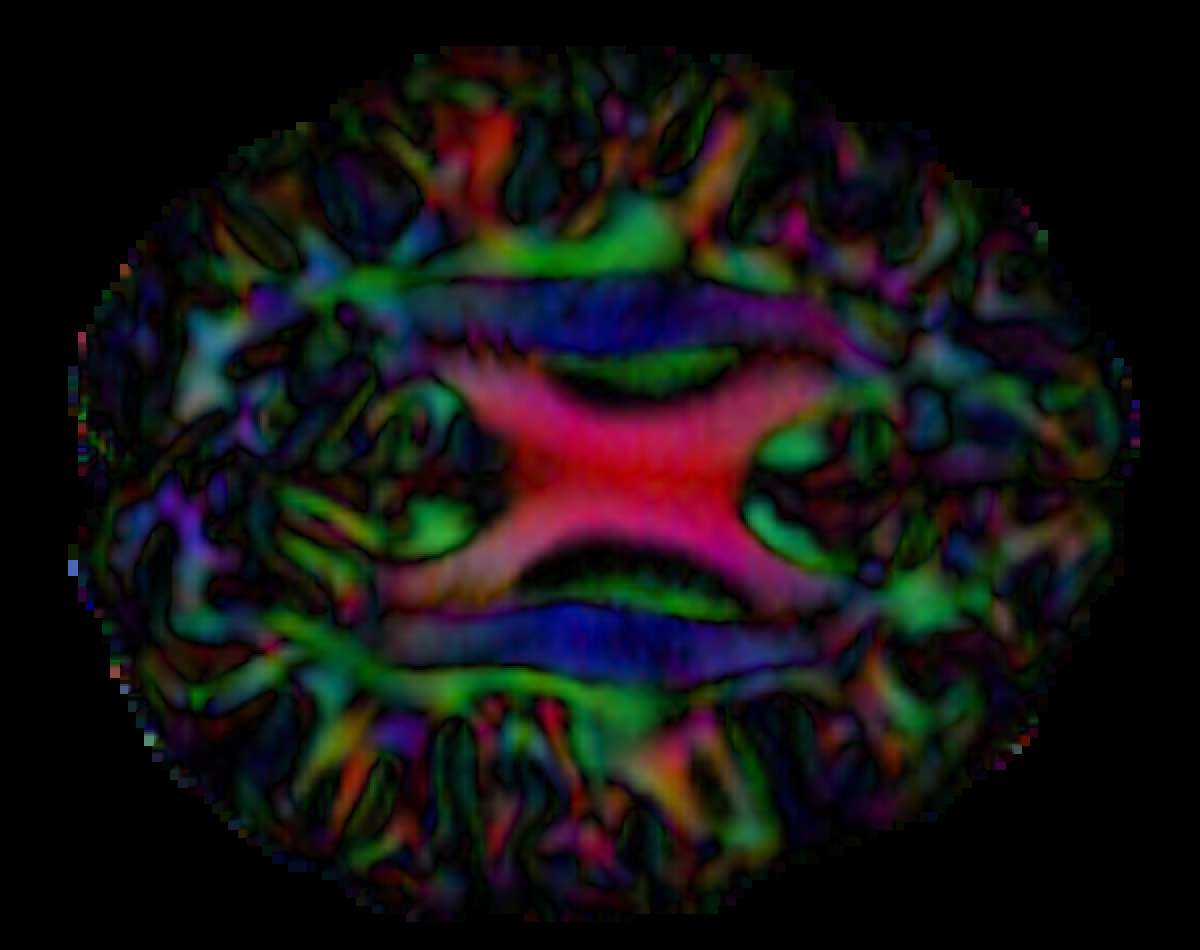}
 \caption{Example of 2D diffusion tensor images.}
 \label{fig:dti}
\end{figure}

Consider three synthetic matrix-valued densities $\Lambda^0$, $\Lambda^1$, and $\Lambda^2$ in Figure~\ref{fig:matrixmarginals}.
The densities $\Lambda^0$ and $\Lambda^1$ have 
mass at the same spatial location,
but the ellipsoids have different shapes.
The densities $\Lambda^0$ and $\Lambda^2$ have the same ellipsoids at different spatial locations.

We compute the distance between $\Lambda^0$, $\Lambda^1$, and $\Lambda^2$
for different parameters $\alpha$ and fixed $\vL=[L_1, L_2]^*$ with 
	\[
		L_1 = \left[\begin{matrix}
		1 & 0 & 0\\
		0 & 2 & 0\\
		0 & 0 & 0
		\end{matrix}\right],
		\quad
		L_2 = \left[\begin{matrix}
		1 & 1 & 1\\
		1 & 0 & 0\\
		1 & 0 & 0
		\end{matrix}\right].		
	\]

Table~\ref{tab:param}, shows the results
with $\|\vU\|_2$  and $\|\vW\|_1$ and grid size $128 \times 128$.
As we can see, whether $\Lambda^0$ is more ``similar'' to $\Lambda^1$ or $\Lambda^2$,
i.e., whether
$M(\lambda^0,\lambda^1)<M(\lambda^0,\lambda^2)$ 
or
$M(\lambda^0,\lambda^1)>M(\lambda^0,\lambda^2)$,
depends on whether the cost on $\vU$, spatial transport, is higher than the cost on $\vW$, 
changing the ellipsoids.

\begin{figure}[h]
\centering
\subfloat[$\Lambda^0$]{\includegraphics[width=0.33\textwidth]{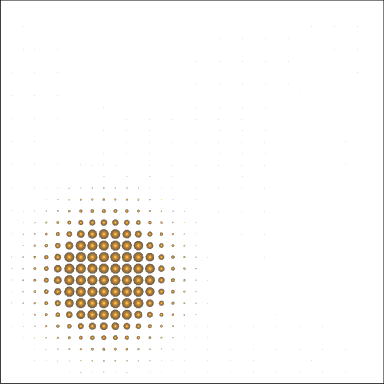}}
\subfloat[$\Lambda^1$]{\includegraphics[width=0.33\textwidth]{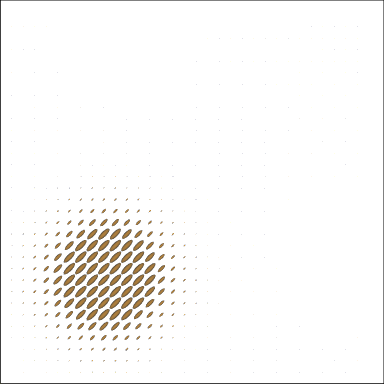}}
\subfloat[$\Lambda^2$]{\includegraphics[width=0.33\textwidth]{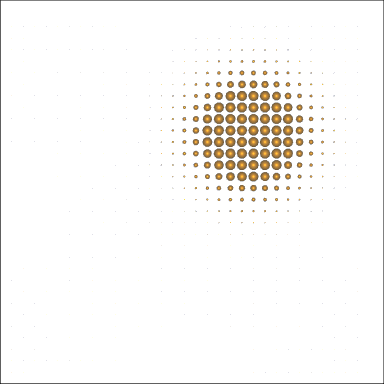}}
 \caption{Synthetic matrix-valued distributions.}
 \label{fig:matrixmarginals}
\end{figure}
\begin{table}
\centering
\begin{tabular}[h]{| c | c | c | c |}
\hline
Parameters & $M(\Lambda^0,\Lambda^1)$ & $M(\Lambda^0,\Lambda^2)$ & $M(\Lambda^1,\Lambda^2)$ \\
\hline
$\alpha=10$ & 2.71 & 0.27 & 3.37 \\
$\alpha=3$ & 0.81  & 0.27  & 1.08 \\
$\alpha=1$ & 0.27 & 0.27 & 0.54 \\
$\alpha=0.3$ & 0.081 & 0.27 & 0.35 \\
$\alpha=0.1$ & 0.027 & 0.27 & 0.29 \\
\hline
\end{tabular}
\caption{Distances between the three images $\Lambda^0,\Lambda^1$ and $\Lambda^2$.}
\label{tab:param}
\end{table}

Again, we test our algorithm on the setup of Figure~\ref{fig:matrixmarginals} 
with $\alpha=1$
on grid sizes $32\times 32, 64 \times 64, 128 \times 128$, and $256 \times 256$. Table~\ref{tab:matrixOMT1}
shows the number of iterations and runtime tested on a Nvidia Titan Xp GPU
required 
to achieve a precision of $10^{-3}$, measured as the ratio between the duality gap and primal objective value.

\begin{table}
\centering
\begin{tabular}[h]{| c | c | c | c | c| c|}
\hline
Grid Size & Iteration count & Time per-iter& Iteration time & $\tau$ & $M(\Lambda^0,\Lambda^1)$ \\
\hline
$32 \times 32$ & $1\times 10^4$ & $39\mu s$ & $0.39s$&10  & 0.27\\
$64 \times 64$ & $1.5\times 10^4$ & $39\mu s$ & $0.59s$&10  & 0.27\\
$128 \times 128$ &  $2\times 10^4$& $85\mu s$ & $1.70s$&30  & 0.27\\
$256 \times 256$ &  $4\times 10^4$&$330\mu s$ & $13.2s$&60  & 0.27\\
\hline
\end{tabular}
\caption{Computation cost for matrix-OMT as a function of grid size.}
\label{tab:matrixOMT1}
\end{table}

\subsection{Discussion}
The best choice of the norms for the vector and matrix-OMT problem
depends on the application.
In color image processing, for example, we recommend to use $\|\cdot\|_{1,2}$ for the $\|\cdot\|_u$ norm (which penalizes transport over space)
since it respects the Euclidean geometry in the spatial domain and the penalties on different colors are decoupled.
On the other hand, the choice for $\|\cdot\|_w$ norm (which penalizes change in color)
seems to matter less.
As another example, the nuclear norm is relevant to the problem of
Michell trusses in structural mechanics \cite{gangbo2008}.

%

The GPU implementation provides significant acceleration.
For reference,
we compared a serial CPU implementation 
against a parallel CUDA implementation
for the scalar-OMT problem.
When we run the method on a
$1024\times1024$ image,
the serial implementation took 
$157.8ms$ per iteration
while the CUDA implementation took
$1.27ms$ per iteration.
For computational problems with very simple parallelizable structures,
$50$-fold to $150$-fold speedup is common.
An Intel Core i7 CPU X990 @ 3.47GHz
and Nvidia Titan Xp GPU were used for this experiment.

\section{Conclusions}
In this paper, we studied the extensions of Wasserstein-1 optimal transport to vector and matrix-valued densities.
This extension, as a tool of applied mathematics, is interesting
if the mathematics is sound, if the numerical methods are good, and if the applications are interesting.
In this paper, we investigated all three concerns.


From a practical viewpoint, that we can solve vector and matrix-OMT problems of realistic sizes in modest time with GPU computing is the most valuable observation.
Applying our algorithms to tackle real world problems in signal/imaging processing, medical imaging, and machine learning would be interesting directions of future research.

Another interesting direction of study is quadratic regularization.
In general, the solutions to vector and matrix-OMT problems are not unique.
However, the regularized version of \eqref{vomt} 
\begin{equation*}
\begin{array}{ll}
\underset{\vect{\vu},\vect{w}}{\text{minimize}}&
\int_{\Omega}\|\vect{\vu}(\vx)\|_u +\alpha\|\vect{w}(\vx)\|_w+ \epsilon (\|\vect{\vu}(\vx)\|_u^2+\|\vect{w}(\vx)\|_w^2)\;d\vx\\
\mbox{subject to} &
\divg_\vx (\vect{\vu})(\vx)
+
\divg_\mathcal{G}(\vect{w}(\vx))=\vect{\lambda}^0(\vx)-\vect{\lambda}^1(\vx)\\
& \vect{\vu} \text{ satisfies zero-flux b.c}
\end{array}
\end{equation*}
is strictly convex and therefore has a unique solution. A similar regularization can be done for matrix-OMT.
As discussed in \cite{LiRyuOsh17, L1partial},
this form of regularization is particularly useful as a slight modification to the 
proposed method solves the regularized problem.

%
%
%
%
%
%

\section*{Acknowledgments}
We would like to thank Wilfrid Gangbo for many fruitful and inspirational discussions on the related topics.
The Titan Xp?s used for this research were donated by the NVIDIA Corporation. This work was funded by
ONR grants N000141410683, N000141210838, DOE grant de-sc00183838 and a startup funding from Iowa
State University.

\appendix

\section{Discretization}
Here, we describe the discretization for the vector-OMT problem
\eqref{vomt} and \eqref{vomt-dual}.
The discretization for the matrix-OMT problem is similar.
We consider the 2D square domain, but (with more complicated notation)
our approach
immediately generalizes to more general domains.
Again, we use the same symbol for the discretization and its continuous counterpart.

Consider a $n\times n$ discretization of $\Omega$ with finite difference $\Delta x$ in both $x$ and $y$ directions.
Write the $x$ and $y$ coordinates of the points as $x_1,\dots,x_n$ and $y_1,\dots,y_n$.
So we are approximating the domain $\Omega$ with $\{x_1,\dots,x_n\}\times\{y_1,\dots,y_n\}$.
Write $C(x,y)$ for the $\Delta x\times\Delta x$ cube centered at $(x,y)$, i.e., 
\[
C(x,y)=\{
(x',y')\in \mathbb{R}^2\,|\,
|x'-x|\le \Delta x/2
\,,\,
|y'-y|\le \Delta x/2
\}\ .
\]

We use a finite volume approximation for $\vect{\lambda}^0$, $\vect{\lambda}^1$, $\vect{w}$ and $\vect{\phi}$.
Specifically, we write $\vect{\lambda}^0\in \mathbb{R}^{n\times n\times k}$ with
\[
\vect{\lambda}^0_{ij}\approx \int _{C(x_i,y_j)}\vect{\lambda}^0(x,y)\;dxdy,
\]
for $i,j=1,\dots,n$. The discretizations $\vect{\lambda}^1,\vect{\phi}\in \mathbb{R}^{n\times n\times k}$ 
and $\vect{w}\in \mathbb{R}^{n\times n \times \ell}$ are defined the same way.

Write $\vect{\vu}=(\vect{u}_x,\vect{u}_y)$
for both the continuous variables and their discretizations.
To be clear, the subscripts of $\vect{u}_x$ and $\vect{u}_y$ do not denote differentiation.
We use the discretization $\vect{u}_x\in \mathbb{R}^{(n-1)\times n\times k}$
and $\vect{u}_y\in \mathbb{R}^{n\times (n-1)\times k}$.
For $i=1,\dots,n-1$ and $j=1,\dots,n$
\[
\vect{u}_{x,ij} \approx \int_{C(x_i+\Delta x/2,y_j)}\vect{u}_x(x,y)\;dxdy,
\]
and for $i=1,\dots,n$ and $j=1,\dots,n-1$
\[
\vect{u}_{y,ij} \approx \int_{C(x_i,y_j+\Delta x/2)}\vect{u}_y(x,y)\;dxdy.
\]
In defining $\vect{u}_x$ and $\vect{u}_y$, the center points are placed
between the $n\times n$ grid points to make the finite difference operator symmetric.

Define the discrete spacial divergence operator
$\divg_\vx (\vect{\vu})\in \mathbb{R}^{n\times n\times k}$
as
\[
\divg_\vx (\vect{\vu})_{ij}
=\frac{1}{\Delta x}
(\vect{u}_{x,ij}-\vect{u}_{x,(i-1)j}+\vect{u}_{y,ij}-\vect{u}_{y,i(j-1)}),
\]
for $i,j=1,\dots,n$,
where we mean $\vect{u}_{x,0j}=\vect{u}_{x,nj}=0$
for $j=1,\dots,n$ and $\vect{u}_{y,i0}=\vect{u}_{y,in}=0$
for $i=1,\dots,n$.
This definition of $\divg_\vx (\vect{\vu})$
makes the discrete approximation consistent with the zero-flux boundary condition.

For $\vect{\phi}\in \mathbb{R}^{n\times n}$,
define the discrete gradient operator
$\nabla_\vx \vect{\phi}
=((\nabla \vect{\phi})_x,(\nabla \vect{\phi})_y)$
as 
\begin{align*}
(\nabla \vect{\phi})_{x,ij}=
(1/\Delta x)
\left(\vect{\phi}_{i+1,j}-\vect{\phi}_{i,j}\right)&\quad\text{for }
i=1,\dots,n-1,\,j=1,\dots,n\\
(\nabla \vect{\phi})_{y,ij}=
(1/\Delta x)
\left(\vect{\phi}_{i,j+1}-\vect{\phi}_{i,j}\right)&\quad\text{for }
i=1,\dots,n,\,j=1,\dots,n-1.
\end{align*}
So 
$(\nabla\vect{\phi})_x\in \mathbb{R}^{ (n-1)\times n\times  k}$ and $(\nabla\vect{\phi})_y\in \mathbb{R}^{ n\times (n-1)\times  k}$,
and the  $\nabla_\vx$ is the transpose (as a matrix) of $-\divg_\vx$.

Ghost cells are convenient
for both describing and implementing the method.
This approach is similar to that of 
\cite{LiRyuOsh17}.
We redefine the variable
$\vect{\vu}=(\vect{u}_x,\vect{u}_y)$
so that
\begin{align*}
\vect{u}_{x,ij}=
\left\{
\begin{array}{ll}
\vect{u}_{x,ij}
&\text{for }
i<n\\
0&\text{for }i=n
\end{array}
\right.
\qquad
\vect{u}_{y,ij}=
\left\{
\begin{array}{ll}
\vect{u}_{y,ij}&\text{for }
j<n\\
0&\text{for }j=n\ ,
\end{array}
\right.
\end{align*}
for $i,j=1,\dots,n$, and $\vect{u}_x,\vect{u}_y\in \mathbb{R}^{ n\times n\times k}$.
We also redefine  $\nabla\vect{\phi}=((\nabla \vect{\phi})_x,(\nabla \vect{\phi})_y)$
so that 
\begin{align*}
(\nabla \vect{\phi})_{x,ij}=
\left\{
\begin{array}{ll}
(\nabla \vect{\phi})_{x,ij}
&\text{for }
i<n\\
0&\text{for }i=n
\end{array}
\right.\qquad
(\nabla \vect{\phi})_{y,ij}=
\left\{
\begin{array}{ll}
(\nabla \vect{\phi})_{y,ij}&\text{for }
j<n\\
0&\text{for }j=n\ ,
\end{array}
\right.
\end{align*}
for $i,j=1,\dots,n$, and
$(\nabla \vect{\phi})_{x},(\nabla \vect{\phi})_{y}\in \mathbb{R}^{n\times n\times k}$.

With some abuse of notation, we write
\[
\|\vect{\vu}\|_u=\sum^n_{i=1}\sum^n_{j=1}
\|(\vect{u}_{x,ij},\vect{u}_{y,ij})\|_u
\qquad
\|\vect{w}\|_w=\sum^n_{i=1}\sum^n_{j=1}
\|\vect{w}_{ij}\|_w.
\]
%
%
%
Using this notation, we write the discretization of \eqref{vomt} as
\begin{equation*}
\begin{aligned}
& \underset{\vect{\vu},\vect{w}}{\text{minimize}}
& &   \|\vect{\vu}\|_{u}  + \alpha \|\vect{w}\|_{w}  \\
& \text{subject to}
& & \divg_\vx (\vect{\vu})+\divg_\cG(\vect{w})=\vect{\lambda}^0-\vect{\lambda}^1,
\end{aligned}
\end{equation*}
where the boundary conditions are implicitly handled by the discretization.


\section{Shrink operators with closed-form solutions}
Define $\srk_1:\mathbb{C}\rightarrow \mathbb{C}$ as
\begin{align*}
\srk_1(x;\mu)
&=
\argmin_{z\in \mathbb{C}}\left\{
\mu |z|+(1/2)|z-x|^2
\right\}\\
&=
\left\{
\begin{array}{ll}
(1-\mu/|x|)x&\text{ for }|x|\ge \mu\\
0&\text{otherwise}
\end{array}
\right.
\end{align*}
for $\mu> 0$.
Define $\srk_2:\mathbb{C}^k\rightarrow \mathbb{C}^k$ as
\begin{align*}
\srk_2(x;\mu)
&=\argmin_{z\in \mathbb{C}^k}\left\{
\mu \|z\|_2+(1/2)\|z-x\|^2_2
\right\}\\
&=\left\{
\begin{array}{ll}
(1-\mu/\|x\|_2)x&\text{ for }\|x\|_2\ge \mu\\
0&\text{otherwise}
\end{array}
\right.
\end{align*}
for $\mu>0$, where $\|\cdot\|_2$ is the standard Euclidean norm.
Define $\srk_\mathrm{nuc}:\cC\rightarrow \cC$ as
\begin{align*}
\srk_\mathrm{nuc}(X;\mu)
&=\argmin_{Z\in \cC}\left\{
\mu \|Z\|_*+(1/2)\|Z-X\|^2_2
\right\}\\
&=
U\diag\left( [(\sigma_1-\mu)_+,\dots,(\sigma_n-\mu)_+]\right)
V^T
\end{align*}
where $\|\cdot\|_*$ is the nuclear norm and 
$X=U\diag\left( [\sigma_1,\dots,\sigma_n]\right)V^T$
is the singular-value decomposition of $X$
\cite{CaiCanShe10}.
If $x=(x_1,x_2,\dots,x_k)$ and 
$\|x\|=\|x_1\|_1+\|x_2\|_2+\dots+ \|x_k\|_k$ for some norms
$\|\cdot\|_1,\dots,\|\cdot\|_k$,
then the shrink operators can be applied component-by-component.
So if $\srk_1,\srk_2,\dots,\srk_k$ are the individual shrink operators, 
then
\[
\srk(x;\mu)=\begin{bmatrix}
\srk_1(x_1;\mu)\\
\srk_2(x_2;\mu)\\
\vdots\\
\srk_k(x_k;\mu)\\
\end{bmatrix}.
\]
All shrink operators we consider in this paper can be built from these shrink operators.
These ideas are well-known to the compressed sensing and proximal methods community
\cite{parikh2014}.

However, there is a subtlety we must address 
when applying the shrink operators to matrix-OMT:
the $\vU$ update is defined as the minimization over $\cH$, not all of $\cC$,
and the $\vW$ update is defined as the minimization over $\cS$, not all of $\cC$.
Fortunately, this is not a problem
when we use unitarily invariant norms (such as the nuclear norm) thanks to the following lemma.

\begin{lemma}
\label{lm:unitary}
Let $M\in \cH$ and $N\in \cS$.
Let $\|\cdot\|$ be a unitarily invariant norm, i.e.,
$\|UAV\|=\|A\|$ for any $A\in \cC$, and $U,V\in \cC$ unitary.
Then
\[
M^+=
\argmin_{A\in \cH}
\left\{
\|A\|+\frac{1}{2}
\|A-M\|^2_2
\right\}=
\argmin_{A\in \cC}
\left\{
\|A\|+\frac{1}{2}
\|A-M\|^2_2
\right\}
\]
is Hermitian
\[
N^+=
\argmin_{A\in \cS}
\left\{
\|A\|+\frac{1}{2}
\|A-N\|^2_2
\right\}=
\argmin_{A\in \cC}
\left\{
\|A\|+\frac{1}{2}
\|A-N\|^2_2
\right\}
\]
is Skew-Hermitian.
\end{lemma}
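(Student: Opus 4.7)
The plan is to exploit strict convexity and symmetry. The objective $F(A) = \|A\| + (1/2)\|A - M\|_2^2$ is strictly convex on $\cC$ because the squared Hilbert-Schmidt term is strongly convex and the norm term is convex. Thus the unconstrained minimizer over $\cC$ exists and is unique. The strategy is to exhibit an objective-preserving involution on $\cC$ whose fixed-point set is exactly $\cH$ (respectively $\cS$); by uniqueness of the minimizer, the minimizer must coincide with its image under the involution and hence lie in the fixed-point set, so the constrained and unconstrained problems have the same solution.

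For the Hermitian case I would use the map $\sigma: A \mapsto A^*$. Two facts are needed. First, $\|A^*\|_2 = \|A\|_2$ for the Hilbert-Schmidt norm, which is immediate from $\trace((A^*)(A^*)^*) = \trace(A^*A)$. Combined with $M = M^*$, this gives
\[
\|A^* - M\|_2 = \|A^* - M^*\|_2 = \|(A-M)^*\|_2 = \|A - M\|_2.
\]
Second, $\|A^*\| = \|A\|$ for any unitarily invariant norm, because $A$ and $A^*$ have identical singular values (apply the singular value decomposition $A = U\Sigma V^*$ to get $A^* = V\Sigma U^*$, and unitarily invariant norms depend only on $\Sigma$). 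Together these yield $F(A^*) = F(A)$ for all $A \in \cC$. Hence if $M^+$ is the unique minimizer of $F$ over $\cC$, then $(M^+)^*$ is also a minimizer, so $(M^+)^* = M^+$, i.e., $M^+ \in \cH$. Since $\cH \subset \cC$, the minimizer over $\cH$ must also equal $M^+$.

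For the skew-Hermitian case I would use the map $\tau: A \mapsto -A^*$. Since $N^* = -N$,
\[
-A^* - N = -A^* + N^* = -(A - N)^*,
\]
so $\|-A^* - N\|_2 = \|A - N\|_2$, and again $\|-A^*\| = \|A^*\| = \|A\|$ for any unitarily invariant norm. Thus the objective $\|A\| + (1/2)\|A - N\|_2^2$ is invariant under $\tau$. If $N^+$ is its unique minimizer, then $-(N^+)^* = N^+$, that is $N^+ \in \cS$, and the constrained minimizer over $\cS$ coincides with the unconstrained one.

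The only real subtlety is justifying $\|A^*\| = \|A\|$ for unitarily invariant norms in the complex setting; once that is in place the proof is a short symmetry/uniqueness argument and I do not anticipate any significant obstacle beyond carefully verifying the sign in the skew-Hermitian computation.
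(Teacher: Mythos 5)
Your proof is correct, but it takes a genuinely different route from the paper's. The paper proves the lemma constructively: it invokes von Neumann's representation of a unitarily invariant norm as a symmetric gauge function $f$ of the singular values, together with Lewis's formula for the subdifferential $\partial\|\cdot\|(X)$, and then verifies that $U\diag(\sigma^+)U^T$ (with $\sigma^+$ the vector shrink of $\sigma(M)$) satisfies the first-order optimality conditions, so the minimizer inherits the eigenvectors of $M$ and is Hermitian (and analogously for $N$, after carefully relating the eigendecomposition of a skew-Hermitian matrix to its SVD). Your argument instead observes that the objective is strictly convex, hence has a unique minimizer over $\cC$, and that the involutions $A\mapsto A^*$ and $A\mapsto -A^*$ preserve the objective (using $\|A^*\|=\|A\|$ for unitarily invariant norms and $M=M^*$, resp.\ $N^*=-N$), forcing the unique minimizer to lie in the fixed-point set $\cH$, resp.\ $\cS$. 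Your route is more elementary --- it needs no subdifferential calculus and sidesteps the mild delicacy in the paper of treating $M=U\Sigma U^T$ simultaneously as an eigendecomposition and an SVD when $M$ is indefinite --- and it proves exactly what the lemma claims. What it does not deliver is the explicit structure $M^+=U\diag(\sigma^+)U^T$ that the paper's computation exposes, which is the fact one actually uses to evaluate, e.g., the nuclear-norm shrink in closed form; but since the lemma only asserts that the constrained and unconstrained minimizers coincide and lie in $\cH$ or $\cS$, your symmetry-plus-uniqueness argument is a complete and clean proof of the stated result.
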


\begin{proof}
The minimum over $\cH\subset \cC$ is the same as the minimum over all of $\cC$,
if the minimum over all of $\cC$ is in $\cH$.
Likewise, 
the minimum over $\cS\subset \cC$ is the same as the minimum over all of $\cC$,
if the minimum over all of $\cC$ is in $\cS$.

Write $\sigma:\cC\rightarrow \mathbb{R}^n$
for the function that outputs singular values
in decreasing order.
Because of unitary invariance, we can write
$\|X\|=f(\sigma(X))$,
where
$f:\mathbb{R}^n\rightarrow \mathbb{R}$ is a norm on $\mathbb{R}^n$
\cite{Neumann1937}.
Furthermore, it's subdifferential can be written as
\cite{Lewis1995}
\[
\partial \|\cdot\|(X)=
\{U
\diag(\mu)
V^T\,|\,
\mu\in(\partial f) (\sigma(X)),\,
U\diag\sigma(X)V^T=X\text{ is SVD}
\}.
\]

Write $M=U\Sigma U^T$
for $M$'s eigenvalue decomposition, which is also
its singular value decomposition.
Define
\[
\sigma^+=\argmin_{s}\{
f(s)+(1/2)\|s-\sigma(M)\|_2^2
\},
\]
which implies
$0\in \partial f(\sigma^+)+\sigma^+-\sigma(M)$.
With this, we can verify that 
\[
0\in U\diag(\partial f(\sigma^+))U^T+U\diag(\sigma^+)U^T
-U\diag(\sigma(M)) U^T,
\]
i.e., $U\diag(\sigma^+)U^T$ satisfies the optimality
conditions of the optimization problem that defines $M^+$.
So 
$M^+=U\diag(\sigma^+)U^T$, which is Hermitian.

Likewise, write $N=U\Lambda U^T$ for
$N$'s eigenvalue decomposition.
$N$ has orthonormal eigenvectors
and its eigenvalues are purely imaginary.
We separate out the magnitude and phase of $\Lambda$
and write
$N=U\diag(\sigma(N))PU^T$.
More precisely, $\sigma(N)= |\diag(\Lambda)|$
and $P$ is a diagonal matrix with diagonal components $\pm i$. Then the SVD of $N$ is
$N=U\diag(\sigma(N))V^T$
where $PU^T=V^T$.
With the same argument as before, we conclude that
$N^+=U\diag(\sigma^+)V^T$
for some $\sigma^+\in \mathbb{R}^n$.
So
$N^+=U\Lambda^+U^T$
where $\Lambda^+=\diag(\sigma^+)P$ is diagonal and purely imaginary,
and we conclude $N^+$ is skew-Hermitian.
\end{proof}

The norm $\|\cdot\|_1$ as described in Section~\ref{ss:shrink}, 
is not unitarily invariant.
However, its shrink operator acts element-by-element,
so it is easy to arrive at a conclusion similar to that of Lemma~\ref{lm:unitary}.

\bibliographystyle{siamplain}
\bibliography{./refs}

\bibliographystyle{siamplain}
\bibliography{./refs}

\begin{thebibliography}{10}

\bibitem{AhuMagOrl93}
{\sc R.~K. Ahuja, T.~L. Magnanti, and J.~B. Orlin}, {\em Network Flows: Theory,
  Algorithms, and Applications}, Prentice Hall, 1993.

\bibitem{AmbGigSav06}
{\sc L.~Ambrosio, N.~Gigli, and G.~Savar{\'e}}, {\em Gradient Flows: In Metric
  Spaces and in the Space of Probability Measures}, Springer, 2006.

\bibitem{AngHakTan03}
{\sc S.~Angenent, S.~Haker, and A.~Tannenbaum}, {\em Minimizing flows for the
  {M}onge--{K}antorovich problem}, SIAM J. Math. Anal., 35 (2003), pp.~61--97.

\bibitem{bauschke2012}
{\sc H.~H. Bauschke, R.~I. Bo\c{t}, W.~L. Hare, and W.~M. Moursi}, {\em
  Attouch-{T}h\'era duality revisited: Paramonotonicity and operator
  splitting}, J. Approx. Theory, 164 (2012), pp.~1065--1084.

\bibitem{BenBre00}
{\sc J.-D. Benamou and Y.~Brenier}, {\em A computational fluid mechanics
  solution to the {M}onge-{K}antorovich mass transfer problem}, Numer. Math.,
  84 (2000), pp.~375--393.

\bibitem{BenCarCut15}
{\sc J.-D. Benamou, G.~Carlier, M.~Cuturi, L.~Nenna, and G.~Peyr{\'e}}, {\em
  Iterative {B}regman projections for regularized transportation problems},
  SIAM J. Sci. Comput., 37 (2015), pp.~A1111--A1138.

\bibitem{BenFroObe14}
{\sc J.-D. Benamou, B.~D. Froese, and A.~M. Oberman}, {\em Numerical solution
  of the optimal transportation problem using the {M}onge-{A}mpere equation},
  J. Comput. Phys., 260 (2014), pp.~107--126.

\bibitem{gangbo2008}
{\sc G.~Bouchitt\'e, W.~Gangbo, and P.~Seppecher}, {\em Michell trusses and
  lines of principal action}, Mathematical Models and Methods in Applied
  Sciences, 18 (2008), pp.~1571--1603.

\bibitem{Bre91}
{\sc Y.~Brenier}, {\em Polar factorization and monotone rearrangement of
  vector-valued functions}, Comm. Pure Appl. Math., 44 (1991), pp.~375--417.

\bibitem{CaiCanShe10}
{\sc J.-F. Cai, E.~J. Cand{\`e}s, and Z.~Shen}, {\em A singular value
  thresholding algorithm for matrix completion}, SIAM J. Optim., 20 (2010),
  pp.~1956--1982.

\bibitem{CARLEN20171810}
{\sc E.~A. Carlen and J.~Maas}, {\em Gradient flow and entropy inequalities for
  quantum markov semigroups with detailed balance}, Journal of Functional
  Analysis, 273 (2017), pp.~1810--1869.

\bibitem{ChaPoc11}
{\sc A.~Chambolle and T.~Pock}, {\em A first-order primal-dual algorithm for
  convex problems with applications to imaging}, J. Math. Imaging Vis., 40
  (2011), pp.~120--145.

\bibitem{Che16}
{\sc Y.~Chen}, {\em Modeling and Control of Collective Dynamics: From
  {S}chr{\"o}dinger bridges to Optimal Mass Transport}, PhD thesis, University
  of Minnesota, 2016.

\bibitem{chen2017}
{\sc Y.~Chen, W.~Gangbo, T.~T. Georgiou, and A.~Tannenbaum}, {\em On the matrix
  {M}onge-{K}antorovich problem}, arXiv,  (2017).

\bibitem{CheGeoPav15a}
{\sc Y.~Chen, T.~Georgiou, and M.~Pavon}, {\em Entropic and displacement
  interpolation: a computational approach using the {H}ilbert metric}, SIAM J.
  Appl. Math., 76 (2016), pp.~2375--2396.

\bibitem{CheGeoNinTan17}
{\sc Y.~Chen, T.~T. Georgiou, L.~Ning, and A.~Tannenbaum}, {\em Matricial
  {W}asserstein-1 distance}, IEEE control systems letters, 1 (2017),
  pp.~14--19.

\bibitem{CheGeoPav14e}
{\sc Y.~Chen, T.~T. Georgiou, and M.~Pavon}, {\em On the relation between
  optimal transport and {S}chr{\"o}dinger bridges: A stochastic control
  viewpoint}, J. Optim. Theory Appl., 169 (2016), pp.~671--691.

\bibitem{CheGeoPav15b}
{\sc Y.~Chen, T.~T. Georgiou, and M.~Pavon}, {\em Optimal transport over a
  linear dynamical system}, IEEE Trans. Automat. Control, 62 (2017),
  pp.~2137--2152.

\bibitem{yongxin}
{\sc Y.~Chen, T.~T. Georgiou, and A.~Tannenbaum}, {\em Interpolation of density
  matrices and matrix-valued measures: The unbalanced case}, arXiv,  (2016).

\bibitem{CheGeoTan17}
{\sc Y.~Chen, T.~T. Georgiou, and A.~Tannenbaum}, {\em Vector-valued optimal
  mass transport}, arXiv,  (2016).

\bibitem{CheGeoTan16}
{\sc Y.~Chen, T.~T. Georgiou, and A.~Tannenbaum}, {\em Matrix optimal mass
  transport: a quantum mechanical approach}, IEEE Transactions on Automatic
  Control,  (2017).

\bibitem{ChoHuaLiZho12}
{\sc S.-N. Chow, W.~Huang, Y.~Li, and H.~Zhou}, {\em Fokker-{P}lanck equations
  for a free energy functional or {M}arkov process on a graph}, Arch. Ration.
  Mech. Anal., 203 (2012), pp.~969--1008.

\bibitem{ChoLiZho17}
{\sc S.-N. Chow, W.~Li, and H.~Zhou}, {\em Entropy dissipation of
  {{Fokker}}-{{Planck}} equations on graphs}, arXiv,  (2017).

\bibitem{Cut13}
{\sc M.~Cuturi}, {\em Sinkhorn distances: Lightspeed computation of optimal
  transport}, in Neural Inf. Process. Syst., 2013, pp.~2292--2300.

\bibitem{esser2010}
{\sc E.~Esser, X.~Zhang, and T.~F. Chan}, {\em A general framework for a class
  of first order primal-dual algorithms for convex optimization in imaging
  science}, SIAM J. Imaging Sci., 3 (2010), pp.~1015--1046.

\bibitem{Eva99}
{\sc L.~C. Evans and W.~Gangbo}, {\em Differential Equations Methods for the
  Monge-Kantorovich Mass Transfer Problem}, American Mathematical Society,
  1999.

\bibitem{FitLauSte16}
{\sc J.~H. Fitschen, F.~Laus, and G.~Steidl}, {\em Transport between {RGB}
  images motivated by dynamic optimal transport}, J. Math. Imaging Vis., 56
  (2016), pp.~409--429.

\bibitem{GanMcc96}
{\sc W.~Gangbo and R.~J. McCann}, {\em The geometry of optimal transportation},
  Acta Math., 177 (1996), pp.~113--161.

\bibitem{GenCutPeyBac16}
{\sc A.~Genevay, M.~Cuturi, G.~Peyr{\'e}, and F.~Bach}, {\em Stochastic
  optimization for large-scale optimal transport}, in Neural Inf. Process.
  Syst., 2016, pp.~3440--3448.

\bibitem{HabHor15}
{\sc E.~Haber and R.~Horesh}, {\em A multilevel method for the solution of time
  dependent optimal transport}, Numer. Math. Theory Methods Appl., 8 (2015),
  pp.~97--111.

\bibitem{HakTanAng04}
{\sc S.~Haker, L.~Zhu, A.~Tannenbaum, and S.~Angenent}, {\em Optimal mass
  transport for registration and warping}, Int. J. Comput. Vis., 60 (2004),
  pp.~225--240.

\bibitem{he2012}
{\sc B.~He and X.~Yuan}, {\em Convergence analysis of primal-dual algorithms
  for a saddle-point problem: From contraction perspective}, SIAM J. Imaging
  Sci., 5 (2012), pp.~119--149.

\bibitem{JorKinOtt98}
{\sc R.~Jordan, D.~Kinderlehrer, and F.~Otto}, {\em The variational formulation
  of the {F}okker--{P}lanck equation}, SIAM J. Math. Anal., 29 (1998),
  pp.~1--17.

\bibitem{Kan42}
{\sc L.~V. Kantorovich}, {\em On the transfer of masses}, Dokl. Akad. Nauk, 37
  (1942), pp.~227--229.

\bibitem{Lewis1995}
{\sc A.~Lewis}, {\em The convex analysis of unitarily invariant matrix
  functions.}, J. Convex Anal., 2 (1995), pp.~173--183.

\bibitem{LiRyuOsh17}
{\sc W.~Li, E.~K. Ryu, S.~Osher, W.~Yin, and W.~Gangbo}, {\em A parallel method
  for earth mover's distance}, J. Sci. Comput.,  (2017).

\bibitem{liu2017}
{\sc Y.~Liu, E.~K. Ryu, and W.~Yin}, {\em A new use of {D}ouglas-{R}achford
  splitting for identifying infeasible, unbounded, and pathological conic
  programs}, Mathematical Programming,  (2018).

\bibitem{Maa11}
{\sc J.~Maas}, {\em Gradient flows of the entropy for finite {M}arkov chains},
  J. Funct. Anal., 261 (2011), pp.~2250--2292.

\bibitem{Mcc97}
{\sc R.~J. McCann}, {\em A convexity principle for interacting gases}, Adv.
  Math., 128 (1997), pp.~153--179.

\bibitem{Mittnenzweig2017}
{\sc M.~Mittnenzweig and A.~Mielke}, {\em An entropic gradient structure for
  {L}indblad equations and couplings of quantum systems to macroscopic models},
  Journal of Statistical Physics, 167 (2017), pp.~205--233.

\bibitem{Mon81}
{\sc G.~Monge}, {\em M{\'e}moire sur la th{\'e}orie des d{\'e}blais et des
  remblais}, De l'Imprimerie Royale, 1781.

\bibitem{MueKarKolTan13}
{\sc M.~Mueller, P.~Karasev, I.~Kolesov, and A.~Tannenbaum}, {\em Optical flow
  estimation for flame detection in videos}, IEEE Trans. Image Process., 22
  (2013), pp.~2786--2797.

\bibitem{NinGeo14}
{\sc L.~Ning and T.~T. Georgiou}, {\em Metrics for matrix-valued measures via
  test functions}, in IEEE Conf. Decis. Control, IEEE, 2014, pp.~2642--2647.

\bibitem{NinGeoTan13}
{\sc L.~Ning, T.~T. Georgiou, and A.~Tannenbaum}, {\em Matrix-valued
  {M}onge-{K}antorovich optimal mass transport}, in IEEE Conf. Decis. Control,
  IEEE, 2013, pp.~3906--3911.

\bibitem{NinGeoTan15}
{\sc L.~Ning, T.~T. Georgiou, and A.~Tannenbaum}, {\em On matrix-valued
  {M}onge-{K}antorovich optimal mass transport}, IEEE Trans. Automat. Control,
  60 (2015), pp.~373--382.

\bibitem{OttVil00}
{\sc F.~Otto and C.~Villani}, {\em Generalization of an inequality by
  {T}alagrand and links with the logarithmic {S}obolev inequality}, J. Funct.
  Anal., 173 (2000), pp.~361--400.

\bibitem{parikh2014}
{\sc N.~Parikh and S.~Boyd}, {\em Proximal algorithms}, Found. Trends Optim., 1
  (2014), pp.~127--239.

\bibitem{QPT}
{\sc G.~Peyre, L.~Chizat, F.-X. Vialard, and J.~Solomon}, {\em Quantum optimal
  transport for tensor field processing}, European J. Appl. Math.,  (2017).

\bibitem{PockCha11}
{\sc T.~Pock and A.~Chambolle}, {\em Diagonal preconditioning for first order
  primal-dual algorithms in convex optimization}, IEEE Intern. Conf. Comput.
  Vis.,  (2011), pp.~1762--1769.

\bibitem{Rac98}
{\sc S.~T. Rachev and L.~R{\"u}schendorf}, {\em Mass Transportation Problems:
  Volume I: Theory}, vol.~1, Springer, 1998.

\bibitem{rockafellar1974}
{\sc R.~Rockafellar}, {\em Conjugate Duality and Optimization}, Society for
  Industrial and Applied Mathematics, 1974.

\bibitem{ryu2016}
{\sc E.~K. Ryu and S.~Boyd}, {\em Primer on monotone operator methods}, Appl.
  Comput. Math., 15 (2016), pp.~3--43.

\bibitem{L1partial}
{\sc E.~K. Ryu, W.~Li, P.~Yin, and S.~Osher}, {\em Unbalanced and partial
  {$L_1$} {M}onge-{K}antorovich problem: A scalable parallel first-order
  method}, J. Sci. Comput.,  (2017).

\bibitem{stoica2005}
{\sc P.~Stoica and R.~L. Moses}, {\em Spectral Analysis of Signals}, Prentice
  Hall, 2005.

\bibitem{TanGeoTan10}
{\sc E.~Tannenbaum, T.~Georgiou, and A.~Tannenbaum}, {\em Signals and control
  aspects of optimal mass transport and the {B}oltzmann entropy}, in IEEE Conf.
  Decis. Control, IEEE, 2010, pp.~1885--1890.

\bibitem{Vil03}
{\sc C.~Villani}, {\em Topics in {O}ptimal {T}ransportation}, American
  Mathematical Society, 2003.

\bibitem{Vil08}
{\sc C.~Villani}, {\em Optimal {T}ransport: {O}ld and {N}ew}, Springer, 2008.

\bibitem{VogLel17}
{\sc T.~Vogt and J.~Lellmann}, {\em Measure-valued variational models with
  applications to diffusion-weighted imaging}, Journal of Mathematical Imaging
  and Vision,  (2018).

\bibitem{Neumann1937}
{\sc J.~von Neumann}, {\em Some matrix inequalities and metrization of
  matric-space}, Tomsk Univ. Rev., 1 (1937), pp.~286--300.

\end{thebibliography}

\end{document}